\documentclass[reqno]{amsart}

\usepackage[utf8]{inputenc}
\usepackage{amsmath}
\numberwithin{equation}{section}
\numberwithin{figure}{section}
\numberwithin{table}{section}
\usepackage{graphicx}
\usepackage[mathscr]{euscript}
\usepackage{amsfonts}
\usepackage[colorlinks=true, allcolors=blue]{hyperref}
\usepackage{subfigure} 
\usepackage{color}
\usepackage{extarrows}
\usepackage{booktabs}
\usepackage{hyperref}
\usepackage{tikz}
\usepackage{extarrows}
\usepackage{booktabs}
\usepackage{amsthm}
\usepackage{enumerate}
\usepackage{enumitem} 
\usepackage{indentfirst}
\usepackage{embrac}
\usepackage{cite} 
\usetikzlibrary{positioning,shapes}




\def\sE{{\mathscr E}}
\def\sF{{\mathscr F}}

\def\${|\!|\!|}
\def\l|{\left|\!\left|\!\left|}
\def\r|{\right|\!\right|\!\right|}


\newtheorem{theorem}{Theorem}[section]
\newtheorem{lemma}[theorem]{Lemma}
\newtheorem{proposition}[theorem]{Proposition}
\newtheorem{corollary}[theorem]{Corollary}
\theoremstyle{definition}
\newtheorem{definition}[theorem]{Definition}

\theoremstyle{remark}
\newtheorem{remark}[theorem]{Remark}
\numberwithin{equation}{section}

\setcounter{tocdepth}{1}

\begin{document}

\title[Order isomorphisms]{On order isomorphisms intertwining semigroups for Dirichlet forms}

\author{Liping Li}
\address{Fudan University, Shanghai, China.  }
\address{Bielefeld University,  Bielefeld, Germany.}
\email{liliping@amss.ac.cn}
\thanks{The first named author is partially supported by NSFC (No.  11931004) and Alexander von Humboldt Foundation in Germany.}

\author{Hanlai Lin}
\address{Fudan University, Shanghai, China.  }
\email{}

\subjclass[2010]{Primary 31C25, 60J60.}



\keywords{Dirichlet forms,  order isomorphisms intertwining semigroups,  $h$-transformations, quasi-homeomorphisms}

\begin{abstract}
This paper is devoted to characterizing the so-called order isomorphisms intertwining the $L^2$-semigroups of two Dirichlet forms.   We first show that every unitary order isomorphism intertwining semigroups is the composition of $h$-transformation and quasi-homeomorphism.  In addition,  under the absolute continuity condition on Dirichlet forms,  every (not necessarily unitary) order isomorphism intertwining semigroups is the composition of $h$-transformation, quasi-homeomorphism, and multiplication by a certain step function.   
\end{abstract}

\maketitle
\tableofcontents

\section{Introduction}

In a famous paper \cite{K66},  Kac asked the following question: Let $\Omega_i\subset \mathbb{R}^d$,  $i=1,2$, be bounded domains satisfying certain regularity condition,  and $\Delta_i$ be the Laplacian with Dirichlet boundary condition on $\Omega_i$.  Does it follow that $\Omega_1$ and $\Omega_2$ are congruent if knowing that $\Delta_1$ and $\Delta_2$ have the same series of eigenvalues? This question can be formulated in terms of the $L^2$-semigroups $(T^i_t)_{t\geq 0}$ generated by $\Delta_i$.  Take an orthogonal basis $\{e^i_n: n\geq 1\}$ of $L^2(\Omega_i)$ consisting of eigenfunctions of $\Delta_i$ and let $U:L^2(\Omega_1)\rightarrow L^2(\Omega_2)$ be the linear operator such that $Ue^1_n=e^2_n$ for $n\geq 1$.  Then the assumption the the spectra coincide is equivalent to that
\begin{equation}\label{eq:11}
	UT^1_t=T^2_t U,\quad t\geq 0.  
\end{equation}
Kac's problem amounts to asking that does it follow that $\Omega_1$ and $\Omega_2$ are congruent if \eqref{eq:11} holds for a certain unitary operator $U:L^2(\Omega_1)\rightarrow L^2(\Omega_2)$? It has been shown in \cite{GWW92} that,  in general,  the answer is negative. 

In \cite{A02},  Arendt studied Kac's problem under an additional condition that $U$ is a so-called \emph{order isomorphism}: $U$ is bijective, and for $f\in L^2(\Omega_1)$,  $f\geq 0$ if and only if $Uf\geq 0$.  The main result of \cite{A02} stated that if $U$ is an order isomorphism satisfying \eqref{eq:11},  then $\Omega_1$ and $\Omega_2$ are congruent.  In a recent paper \cite{LSW18},  Lenz et al.  investigated an analogical problem in terms of Dirichlet forms.  The terminologies and notations concerning Dirichlet forms are referred to \cite{FOT11,  CF12}; see also \S\ref{SEC21}.  Let $(\sE^i,\sF^i)$ be two quasi-regular and irreducible Dirichlet forms on 
$L^2(E_i,m_i)$,  whose $L^2$-semigroups are denoted by $(T^i_t)_{t\geq 0}$, for $i=1,2$.  Consider an order isomorphism $U: L^2(E_1,m_1)\rightarrow L^2(E_2,m_2)$,  which is defined by the same way as the Laplacian case,  and assume that $U$ \emph{intertwines} $(T^1_t)_{t\geq 0}$ and $(T^2_t)_{t\geq 0}$ in the sense of \eqref{eq:11}.  Then Lenz et al. showed that the topologies of $E_1$ and $E_2$ coincide in the following sense: There exist $\sE^i$-nests $\{F^i_n:n\geq 1\}$ and a map $j: E_1\rightarrow E_2$ such that $j|_{F^1_n}$ is a homeomorphism from $F^1_n$ to $F^2_n$ for any $n\geq 1$.  This problem is also considered in \cite{DW21} for possibly reducible Dirichlet forms and unitary order isomorphisms intertwining their $L^2$-semigroups by an argument involving the so-called ergodic decompositions of Dirichlet forms. 

The main purpose of the current paper is to characterize the order isomorphisms intertwining the $L^2$-semigroups of two Dirichlet forms by means of transformations of Dirichlet forms.  We first note that two transformations of Dirichlet forms,  i.e.  \emph{$h$-transformation} and \emph{quasi-homeomorphism},  are corresponding to particular unitary order isomorphisms intertwining semigroups,  as will be explained in \S\ref{SEC22} and \S\ref{SEC23}.  Then it turns out in Theorem~\ref{THM31} that every unitary order isomorphism intertwining semigroups is the composition of $h$-transformation and quasi-homeomorphism.  Regarding general (not necessarily unitary) order isomorphisms intertwining semigroups,  we assume the absolute continuity condition for the probability transition semigroup of the Markov process associated with $(\sE^1,\sF^1)$ and obtain an irreducible decomposition of $E_1$ for $(\sE^1,\sF^1)$ in the sense of Kuwae \cite{K21}.  Then $E_2$ also admits an irreducible decomposition for $(\sE^2,\sF^2)$.  Our main result,  Theorem~\ref{THM45},  concludes that every order isomorphism intertwining semigroups is the composition of three transformations: $h$-transformation,  quasi-homeomorphism,  and multiplication by a step function which is constant on each invariant set of $(\sE^2,\sF^2)$.

The paper is organized as follows.  In \S\ref{SEC2} we prepare some terminologies and notations about Dirichlet forms and order isomorphisms.  Particularly,  Lemmas~\ref{LM22} and \ref{LM23} show that $h$-transformation and quasi-homeomorphism give special unitary order isomorphisms intertwining semigroups.  The characterization of unitary order isomorphisms intertwining semigroups will be completed in  \S\ref{SEC3}.  Then the general case will be studied in \S\ref{SEC4}. 


\section{$h$-transformation,  quasi-homeomorphisms and order isomorphisms}\label{SEC2}

In this section we prepare some terminologies and notations about Dirichlet forms and order isomorphisms.  

\subsection{Dirichlet forms}\label{SEC21}

Let $(E,\mathcal{B}(E))$ be a measurable space and $m$ be a $\sigma$-finite measure on it.  The completion of $\mathcal{B}(E)$ with respect to $m$ is denoted by $\mathcal{B}^m(E)$.  A Dirichlet form $(\sE,\sF)$ is a symmetric Markovian closed form on $L^2(E,m)$,  for which we refer to the standard textbooks \cite{CF12, FOT11}.  Set $\sE_1(f,g):=\sE(f,g)+(f,g)_{L^2(E,m)}$ for any $f,g\in\sF$.  

Assume further that $E$ is a Hausdorff topological space with the Borel $\sigma$-algebra $\mathcal{B}(E)$ being assumed to be generated by the continuous functions on $E$ and that $m$ is fully supported on $E$. 
An ascending sequence $\{F_n: n\geq 1\}$ of closed subsets of $E$ is called an \emph{$\sE$-nest} if $\cup_{n\geq 1} \sF_{F_n}$ is $\sE_1$-dense in $\sF$, where $\sF_{F_n}:=\{f\in\sF:f=0, m\text{-a.e.  on  } F_n^c\}$.   A subset $N$ of $E$ is called \emph{$\sE$-polar} if there is an $\sE$-nest $\{F_n:n\geq 1\}$ such that $N\subset \cap_{n\geq 1}(E\setminus F_n)$.  A statement depending on $x\in A$ is said to hold \emph{$\sE$-quasi-everywhere} (\emph{$\sE$-q.e.} in abbreviation) on $A$ if there is an $\sE$-polar set $N\subset A$ such that the statement is true for every $x\in A\setminus N$.  A function $f$ on $E$ is said to be \emph{$\sE$-quasi-continuous} (\emph{$\sE$-q.c.} in abbreviation) if there is an $\sE$-nest $\{F_n: n\geq 1\}$ such that $f|_{F_n}$ is finite and continuous on $F_n$ for each $n\ge 1$.

The Dirichlet form $(\sE,\sF)$ is called \emph{quasi-regular},  if 
  \begin{itemize}
      \item[(i)]  There exists an $\sE$-nest $\{F_n:n\geq 1\}$ consisting of compact sets;
      \item[(ii)]  There exists an $\sE_1$-dense subset of $\sF$ whose elements have $\sE$-q.c.  $m$-versions;
      \item[(iii)]  There exists $\{f_n: n\geq 1\}\subset \sF$ having $\sE$-quasi-continuous $m$-versions $\{\tilde{f}_n:n\geq 1\}$ and an $\sE$-polar set $N\subset E$ such that $\{\tilde{f}_n: n\geq 1\}$ separates the points of $E\setminus N$.
  \end{itemize}
It is  called \emph{regular},  if $E$ is a locally compact separable metric space,  $m$ is a Radon measure on $E$ with full support,  and $\sF\cap C_c(E)$ is $\sE_1$-dense in $\sF$ and uniformly dense in $C_c(E)$ respectively,  where $C_c(E)$ is the family of all continuous functions with compact support on $E$. 
  Note that a regular Dirichlet form is always quasi-regular.  
  
\subsection{$h$-transformation}\label{SEC22}

Let $(\sE,\sF)$ be a quasi-regular Dirichlet form on $L^2(E,m)$.  
Denote by $(T_t)_{t\geq 0}$ the $L^2$-semigroup of $(\sE,\sF)$.  Note that $T_t|_{L^2_+(E,m)}$ can be extended to an operator,  still denoted by $T_t$, on $L_+(E,m)$,  where $L^2_+(E,m)=\{f\in L^2(E,m): f\geq 0\}$ and $L_+(E,m)$ is the family of all $m$-equivalence classes of $\mathcal{B}^m(E)$-measurable functions from $E$ to $[0,\infty]$.    More precisely,  take a strictly positive function $\eta\in L^2(E,m)$ and define for any $f\in L_+(E,m)$, 
\[
T_t f:=\lim_{n \rightarrow \infty}T_t(f\wedge  n\eta).  
\]
Since $T_tg\geq 0$ for any $g\in L^2(E,m)$,  it follows that $T_t(f\wedge  n\eta)\uparrow T_t f$ and $T_t f\in L_+(E,m)$.  This definition is independent of the choice of $\eta$.  In fact,  set $f_n:=f\wedge n\eta$.  For another sequence $g_n\in L^2_+(E,m)\uparrow f$,  $\tilde{T}_t f:=\lim_{n\rightarrow \infty}T_tg_n$ is also well defined and $\tilde{T}_t f=T_t f$ can be obtained as follows: For any $h\in L^2_+(E,m)$,  we have
\[
\begin{aligned}
	\int_E h\cdot T_t f dm&=\lim_{n\rightarrow \infty}\int_E h\cdot T_tf_n dm=\lim_{n\rightarrow \infty}\int_E T_th \cdot f_n dm \\
	&=\int_E T_th\cdot fdm=\lim_{n\rightarrow \infty}\int_E T_th  \cdot g_n dm =\int_E h \cdot \tilde{T}_t f dm.
\end{aligned}\]
As a result, $T_t f=\tilde{T}_t f$.

 We introduce a family of excessive functions with respect to $(T_t)_{t\geq 0}$ as follows:
 \begin{equation}\label{eq:23}
\begin{aligned}
    \mathbf{Exc}^+:=\big\{&h\in L_+(E,m): T_t  h \le h,\forall t\geq 0 \text{ and }\\
    & h \text{ admits an }\sE\text{-q.c. } m\text{-version } \tilde{h} \text{ with } \tilde{h}>0, \sE\text{-q.e.}\big\}.  
\end{aligned}
\end{equation}
This family is  not necessarily contained in $L^2(E,m)$ (cf.,  e.g.,  \cite{FOT11, CF12}),  and constant functions belong to $\mathbf{Exc}^+$.  For $h\in \mathbf{Exc}^+$,  
\[
	E_h:=\{x\in E: 0<h(x)<\infty\}=E,\quad m\text{-a.e.},
\]
and
\begin{equation}\label{eq:22}
    T^{h}_tf(x):=\frac{1}{h(x)}T_t(hf)(x),\quad t\geq 0,  f\in L^2(E,h^2\cdot m)
\end{equation}
is called the \emph{$h$-transformed semigroup} on $L^2(E,h^2\cdot m)$.  Clearly $(T^h_t)_{t\geq 0}$ is a strongly continuous contraction Markovian semigroup on $L^2(E,h^2\cdot m)$.  

\begin{lemma}
Let $(\sE,\sF)$ be a quasi-regular Dirichlet form on $L^2(E,m)$ and $h\in \mathbf{Exc}^+$.  Then the $h$-transformed semigroup $(T^h_t)_{t\geq 0}$ is associated with the quasi-regular Dirichlet form on $L^2(E,h^2\cdot m)$:
\begin{equation}\label{eq:21}
    \begin{aligned}
         &\sF^{h}=\{f\in L^2(E,h^2\cdot m),fh \in \sF\}\\
         &\sE^{h}(f,g)=\sE(fh,gh),\quad f,g\in \sF^h.  
    \end{aligned}
\end{equation}
Furthermore,  the following hold:
\begin{itemize}
\item[(1)] An ascending sequence $\{F_n\}$ of closed subsets of $E$ is an $\sE^h$-nest,  if and only if it is an $\sE$-nest. 
\item[(2)] A set $N\subset E$ is $\sE^h$-polar if and only if it is $\sE$-polar.
\item[(3)] $f$ is $\sE^h$-q.c.  if and only if it is $\sE$-q.c. 
\end{itemize}
\end{lemma}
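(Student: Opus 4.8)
The plan is to realise the $h$-transform as a unitary conjugation and then transport every structure through it. Define the multiplication operator $\Phi\colon L^2(E,h^2\cdot m)\to L^2(E,m)$ by $\Phi f:=hf$. Since $0<h<\infty$ $m$-a.e., the measures $m$ and $h^2\cdot m$ are mutually absolutely continuous, so they determine the same $m$-equivalence classes and the same almost-everywhere statements. A direct computation gives $\|\Phi f\|_{L^2(E,m)}=\|f\|_{L^2(E,h^2\cdot m)}$, so $\Phi$ is unitary with $\Phi^{-1}u=u/h$, and by \eqref{eq:22} one has $T^h_t=\Phi^{-1}T_t\Phi$ for every $t\ge 0$. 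First I would combine this identity with the form-domain description of the generator: for $f\in L^2(E,h^2\cdot m)$,
\[
\tfrac1t\big(f-T^h_tf,\,f\big)_{L^2(E,h^2\cdot m)}=\tfrac1t\big(\Phi f-T_t\Phi f,\,\Phi f\big)_{L^2(E,m)},
\]
whose limit as $t\downarrow 0$ is finite precisely when $\Phi f=hf\in\sF$, in which case it equals $\sE(hf,hf)$. This identifies the form associated with $(T^h_t)_{t\ge 0}$ as exactly $(\sE^h,\sF^h)$ of \eqref{eq:21}; it is symmetric by definition, closed because $\Phi$ maps $(\sF^h,\sE^h_1)$ isometrically onto the complete space $(\sF,\sE_1)$, and Markovian because $(T^h_t)_{t\ge 0}$ is a Markovian semigroup. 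Hence \eqref{eq:21} is a Dirichlet form associated with $(T^h_t)_{t\ge 0}$.

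Next I would prove assertion (1), from which (2) and (3) follow formally. The key point is that $\Phi$ is an $\sE_1$--$\sE^h_1$ isometry, $\sE^h_1(f,g)=\sE_1(\Phi f,\Phi g)$, and maps $\sF^h_{F_n}$ onto $\sF_{F_n}$ for every closed $F_n$: indeed $f=0$ $m$-a.e.\ on $F_n^c$ iff $hf=0$ $m$-a.e.\ on $F_n^c$, using $h>0$ $m$-a.e.\ and $m\sim h^2\cdot m$. Consequently $\bigcup_n\sF^h_{F_n}$ is $\sE^h_1$-dense in $\sF^h$ iff $\bigcup_n\sF_{F_n}$ is $\sE_1$-dense in $\sF$, which is precisely the statement that $\{F_n\}$ is an $\sE^h$-nest iff it is an $\sE$-nest. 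Since $\sE$-polar sets and $\sE$-quasi-continuity are defined purely through nests — a set-theoretic, resp.\ topological, condition on the sets of a nest — assertions (2) and (3) are immediate, the same nest witnessing the property for both forms.

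Finally I would establish quasi-regularity of $(\sE^h,\sF^h)$ using (1)--(3) and the defining properties of $h\in\mathbf{Exc}^+$. For condition (i), any compact $\sE$-nest, which exists by quasi-regularity of $(\sE,\sF)$, is an $\sE^h$-nest by (1). For (ii) and (iii) I would transport good functions through $\Phi^{-1}$. By quasi-regularity of $(\sE,\sF)$ there is an $\sE_1$-dense family $\mathcal{C}\subset\sF$ with $\sE$-q.c.\ versions and a countable separating family with $\sE$-q.c.\ versions; after truncation we may take the latter as $\{g_n\}\subset\sF\cap L^\infty(E,m)$ with $\sE$-q.c.\ versions $\{\tilde g_n\}$ separating points off an $\sE$-polar set $N_0$. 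Writing $\tilde h$ for the $\sE$-q.c.\ version of $h$ from \eqref{eq:23}, positive and finite off an $\sE$-polar set $N_1$, the family $\{g/h:g\in\mathcal C\}$ is $\sE^h_1$-dense in $\sF^h$, while the countable family $\{g_n/h\}\cup\{g_ng_m/h\}$ (well-defined in $\sF^h$ because $\sF\cap L^\infty(E,m)$ is an algebra) has candidate versions $\tilde g_n/\tilde h$ and $\tilde g_n\tilde g_m/\tilde h$; the products are adjoined precisely to detect the multiplicative factor $\tilde h(x)/\tilde h(y)$, so that this family separates points off $N_0\cup N_1$, which is $\sE^h$-polar by (2).

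The main obstacle is to verify that all these versions are $\sE^h$-q.c., equivalently $\sE$-q.c.\ by (3); this is the stability of quasi-continuity under quotients, namely that $u/v$ is $\sE$-q.c.\ whenever $u,v$ are $\sE$-q.c.\ with $0<v<\infty$ $\sE$-q.e. I would prove it by passing to a common $\sE$-nest $\{F_n\}$ on which $u$ and $v$ are continuous and refining it to $\{F_n\cap\{v\ge 1/n\}\}$, on each set of which $1/v$, and hence $u/v$, is continuous; the delicate step is that this refined sequence is again an $\sE$-nest, which follows from the capacitary characterization of nests together with the fact that $\{v\ge 1/n\}$ exhausts $\{v>0\}$ up to an $\sE$-polar set (the standard closure of $\sE$-q.c.\ functions under such operations, cf.\ \cite{FOT11,CF12}). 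Granting this, $\tilde g/\tilde h$, $\tilde g_n/\tilde h$ and $\tilde g_n\tilde g_m/\tilde h$ are $\sE$-q.c., completing (ii), (iii) and thus the quasi-regularity of $(\sE^h,\sF^h)$.
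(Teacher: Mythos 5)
Your proposal is correct and follows the same route as the paper's own proof: realize the $h$-transform as conjugation by the multiplication unitary $\Phi f=hf$, transport nests (hence polar sets and quasi-continuity) through the resulting $\sE_1$--$\sE^h_1$ isometry, and then push the quasi-regularity data of $(\sE,\sF)$ into $(\sE^h,\sF^h)$ by dividing by $h$. The difference is one of rigor, in your favor, at the points the paper waves away. You verify that $(T^h_t)_{t\geq 0}$ is actually associated with \eqref{eq:21} via the approximating forms, which the paper treats as evident. More substantively, for condition (iii) the paper simply transports a separating family $\{\tilde f_n\}$ to $\{\tilde f_n/\tilde h\}$ (misprinted there as $\tilde f\cdot\tilde h$); but division by a nonconstant $\tilde h$ can in principle destroy separation: if $\tilde f_n(x)=c\,\tilde f_n(y)$ for all $n$ with $c=\tilde h(x)/\tilde h(y)$, then all the quotients agree at $x$ and $y$. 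Your adjunction of the products $g_ng_m/h$ closes exactly this gap, since equality of all quotients at $x,y$ forces $c^2=c$ on some nonvanishing product, hence $c=1$, contradicting separation of $\{\tilde g_n\}$. The remaining ingredient you invoke --- that $u/v$ is $\sE$-q.c.\ when $u,v$ are $\sE$-q.c.\ and $0<v<\infty$ $\sE$-q.e., obtained from the refined nest $\{F_n\cap\{\tilde v\geq 1/n\}\}$ --- is indeed the standard fact, and your appeal to it is legitimate. So the proof is sound, and it supplies details (notably the separation fix) that the paper's own one-line treatment of conditions (ii) and (iii) omits.
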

\begin{proof}
Clearly $h^2\cdot m$ is $\sigma$-finite and has full support in $E$,  and $(\sE^h,\sF^h)$ is a Dirichlet form on $L^2(E,h^2\cdot m)$.  It is easy to verify that $\cup_{n\geq 1}\sF^h_{F_n}$ is $\sE^h_1$-dense in $\sF^h$,  if and only if $\cup_{n\geq 1}\sF_{F_n}$ is $\sE_1$-dense in $\sF$.  Hence $\{F_n\}$ is an $\sE^h$-nest if and only if it is an $\sE$-nest.  Particularly,  $N$ is $\sE^h$-polar if and only if it is $\sE$-polar; every $\sE$-q.c. function is $\sE^h$-q.c.  and vice verse.  In addition, the first condition (i) in the definition of quasi-regularity holds true for $(\sE^h,\sF^h)$.  For another two conditions (ii) and (iii),  it suffices to note that for any $\sE$-q.c.  continuous $\tilde f\in \sF$,  $\tilde{f} \cdot \tilde{h}\in \sF^h$ is $\sE^h$-q.c.  That completes the proof. 
\end{proof}

The Dirichlet form $(\sE^h,\sF^h)$ is called the \emph{$h$-transform} of $(\sE,\sF)$.  
Define an isometry 
\begin{equation}\label{eq:25}
U_h: L^2(E,m)\rightarrow L^2(E, h^2\cdot m), \quad f\mapsto \frac{f}{h}
\end{equation}
 for $h\in \mathbf{Exc}^+$.  Then $(\sE^h,\sF^h)$ is the \emph{image Dirichlet form} of $(\sE,\sF)$ under $U_h$ in the sense that 
\[
	\sF^h=U_h \sF,\quad \sE^h(f,g)=\sE(U_h^{-1}f,U_h^{-1}g),\quad f,g\in \sF^h,
\]
where $U^{-1}_h$ is the inverse of $U_h$.  Note that \eqref{eq:22} yields the following.

\begin{lemma}\label{LM22}
Let $h\in \mathbf{Exc}^+$.  Then $U_h T_t f=T^h_t U_hf$ for $t\geq 0$ and $f\in L^2(E,m)$.  
\end{lemma}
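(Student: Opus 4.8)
The plan is to verify the claimed intertwining relation by a direct computation from the two defining formulas, namely the definition \eqref{eq:25} of the isometry $U_h$ and the definition \eqref{eq:22} of the $h$-transformed semigroup. First I would fix $f\in L^2(E,m)$ and observe that $U_h$ genuinely maps $f$ into the correct space: since $h\in\mathbf{Exc}^+$ satisfies $0<h<\infty$ $m$-a.e., we have
\[
\int_E \left(\frac{f}{h}\right)^2 h^2\,dm=\int_E f^2\,dm<\infty,
\]
so that $U_h f=f/h\in L^2(E,h^2\cdot m)$. This is precisely what is needed in order to legitimately apply formula \eqref{eq:22} with argument $U_h f$.

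Next I would compute the right-hand side. By \eqref{eq:22} applied to $U_h f=f/h\in L^2(E,h^2\cdot m)$,
\[
T^h_t U_h f=T^h_t\!\left(\frac{f}{h}\right)=\frac{1}{h}\,T_t\!\left(h\cdot\frac{f}{h}\right).
\]
Because $0<h<\infty$ $m$-a.e., the $m$-equivalence class of $h\cdot(f/h)$ coincides with that of $f$, whence $T_t(h\cdot(f/h))=T_t f$ in $L^2(E,m)$. The left-hand side is immediate from \eqref{eq:25}, namely $U_h T_t f=(T_t f)/h$. Comparing the two expressions shows that both equal $(T_t f)/h$, which gives the asserted identity $U_h T_t f=T^h_t U_h f$.

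This argument is essentially a one-line unwinding of the definitions, so there is no substantive obstacle; the only point demanding care is bookkeeping in the correct $L^2$ spaces. Concretely, one must record that $f\in L^2(E,m)$ forces $U_h f\in L^2(E,h^2\cdot m)$ (so that $T^h_t$ is applicable to it), and conversely that the factor $h$ appearing inside $T_t(h\cdot(f/h))$ keeps the argument in $L^2(E,m)$ (so that $T_t$ is applicable); both reduce to the isometry property of $U_h$ already noted in connection with \eqref{eq:25}. With these measurability and integrability checks in place, the equality follows directly and the proof is complete.
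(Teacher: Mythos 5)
Your proof is correct and matches the paper's intent exactly: the paper states Lemma~\ref{LM22} as an immediate consequence of the definition \eqref{eq:22}, giving no further argument, and your computation is precisely that unwinding of \eqref{eq:25} and \eqref{eq:22} (with the useful bookkeeping that $U_h f\in L^2(E,h^2\cdot m)$ and $h\cdot(f/h)=f$ in $L^2(E,m)$ made explicit).
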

  
\subsection{Quasi-homeomorphism of Dirichlet spaces}\label{SEC23}

Given another Dirichlet form $(\hat{\sE},\hat{\sF})$ on a second $L^2$-space $L^2(\hat{E},\hat{m})$, where $\hat{E}$ is also a Hausdorff topological space and $\hat{m}$ is fully supported on $\hat{E}$,  $(\sE,\sF)$ is said to be quasi-homeomorphic to $(\hat\sE,\hat\sF)$ if there is an $\sE$-nest $\{F_n: n\geq 1\}$,  an $\hat{\sE}$-nest $\{\hat{F}_n: n\geq 1\}$ and a map $j: \cup_{n\ge1} F_n\to \cup_{n\ge1} \hat F_n$ such that 
  \begin{itemize}
      \item[(i)] $j$ is a topological homeomorphism from $F_n$ onto $\hat F_n$ for each $n\ge 1$.
      \item[(ii)] $\hat m=m\circ j^{-1}$.
      \item[(iii)]  It holds that
      \[
      \begin{aligned}
     & \hat{\sF}=\{\hat{f}\in L^2(\hat{E},\hat{m}): \hat{f}\circ j\in \sF\},\\
      &	\hat{\sE}(\hat{f},\hat{g})=\sE(\hat{f}\circ j,\hat{g}\circ j),\quad \hat{f},\hat{g}\in \hat{\sF}. 
     \end{aligned} \]
  \end{itemize}
  The map $j$ is called a \emph{quasi-homeomorphism} from $(\sE,\sF)$ to $(\hat{\sE}, \hat{\sF})$.  
  Define an isometry 
  \begin{equation}\label{eq:26}
  U_j: L^2(E,m)\rightarrow L^2(\hat{E},\hat{m}), \quad  f\mapsto f\circ j^{-1}
  \end{equation}
for a quasi-homeomorphism $j$.  Then $(\hat{\sE},\hat{\sF})$ is the image Dirichlet form of $(\sE,\sF)$ under $U_j$,  i.e. 
  \[
  	\hat{\sF}=U_j\sF,\quad \hat{\sE}(f,g)=\sE(U^{-1}_j f,U^{-1}_j g),\quad f,g\in \hat{\sF},
  \]
 where $U_j^{-1}$ is the inverse of $U_j$.  It is worth pointing out that a quasi-regular Dirichlet form is always quasi-homeomorphic to a certain regular Dirichlet form,  and the quasi-homeomorphic image of a quasi-regular Dirichlet form is still quasi-regular. 
 
 Denote by $(T_t)_{t\geq 0}$ and $(\hat{T}_t)_{t\geq 0}$ the $L^2$-semigroup of $(\sE,\sF)$ and $(\hat{\sE},\hat{\sF})$ respectively.  The following lemma is obvious and the proof is omitted. 
 
 \begin{lemma}\label{LM23}
 Let $j$ be a quasi-homeomorphism from $(\sE,\sF)$ to $(\hat{\sE}, \hat{\sF})$.  Then $\hat{T}_tU_jf=U_jT_tf$ for any $t\geq 0$ and $f\in L^2(E,m)$.  
 \end{lemma}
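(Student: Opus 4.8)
The plan is to exploit that $U_j$ is a \emph{unitary} operator which conjugates the two Dirichlet forms, so that it automatically conjugates their generators and hence their semigroups. In this way the assertion reduces to the standard fact that unitarily equivalent non-positive self-adjoint operators have unitarily equivalent functional calculi; no analysis beyond this transfer principle is needed.

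First I would record that $U_j$ is unitary from $L^2(E,m)$ onto $L^2(\hat{E},\hat{m})$. Since $j$ is a homeomorphism from $F_n$ onto $\hat{F}_n$ for each $n$, with $\{F_n\}$ an $\sE$-nest, $\{\hat{F}_n\}$ an $\hat{\sE}$-nest, and $\hat{m}=m\circ j^{-1}$, the change-of-variables formula gives $\int_{\hat{E}}|f\circ j^{-1}|^2\,d\hat{m}=\int_E|f|^2\,dm$ for $f\in L^2(E,m)$, so $U_j$ is an isometry. Its inverse is $U_j^{-1}g=g\circ j$, an isometry in the opposite direction, whence $U_j$ is onto and therefore unitary. (The sets discarded in restricting to the nests are $\sE$-polar, resp. $\hat{\sE}$-polar, hence $m$-null, resp. $\hat{m}$-null, so they do not affect these $L^2$-identities.)

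Next, let $L$ and $\hat{L}$ denote the non-positive self-adjoint generators of $(\sE,\sF)$ and $(\hat{\sE},\hat{\sF})$, characterized by $-(Lf,g)_{L^2(E,m)}=\sE(f,g)$ for $f\in D(L)$, $g\in\sF$, and similarly for $\hat{L}$. I claim $\hat{L}=U_jLU_j^{-1}$. Indeed, by the definition of the image Dirichlet form one has $\hat{\sF}=U_j\sF$ and $\hat{\sE}(U_jf,U_jg)=\sE(f,g)$ for all $f,g\in\sF$. Thus for $f\in D(L)$ and any $\hat{g}=U_jg\in\hat{\sF}$, unitarity of $U_j$ together with the two form identities yields
\[
	-(U_jLf,\hat{g})_{L^2(\hat{E},\hat{m})}=-(Lf,g)_{L^2(E,m)}=\sE(f,g)=\hat{\sE}(U_jf,U_jg).
\]
Hence $U_jf\in D(\hat{L})$ with $\hat{L}U_jf=U_jLf$; the reverse inclusion follows from the same computation applied to $U_j^{-1}$, so $L$ and $\hat{L}$ are unitarily conjugate via $U_j$.

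Finally, the $L^2$-semigroups are recovered from the generators by the functional calculus, $T_t=e^{tL}$ and $\hat{T}_t=e^{t\hat{L}}$. Since $\hat{L}=U_jLU_j^{-1}$ with $U_j$ unitary, the spectral calculus gives $e^{t\hat{L}}=U_je^{tL}U_j^{-1}$, that is $\hat{T}_tU_j=U_jT_t$ for every $t\geq0$, which is the assertion (the same identity follows equally at the level of resolvents $(\alpha-\hat{L})^{-1}=U_j(\alpha-L)^{-1}U_j^{-1}$ if one prefers to avoid the functional calculus of unbounded operators). The argument presents no genuine obstacle; the only point deserving care is the measure-theoretic bookkeeping ensuring that $U_j$ is truly unitary — well-definedness of $f\circ j^{-1}$ as an $\hat{m}$-class and surjectivity — which is exactly what conditions (i) and (ii) in the definition of quasi-homeomorphism guarantee. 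Everything else is the routine transfer of generators and semigroups under unitary conjugation of closed symmetric forms, which is why the statement is indeed immediate.
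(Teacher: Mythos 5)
Your proof is correct: unitarity of $U_j$ (which follows from condition (ii) via the change of variables $\hat{m}=m\circ j^{-1}$, with the polar sets discarded being null), conjugation of the generators through the image-form identity in condition (iii), and the functional calculus for unitarily equivalent self-adjoint operators together give $\hat{T}_tU_j=U_jT_t$. The paper states this lemma without proof, calling it obvious, and your transfer-under-unitary-conjugation argument is precisely the standard reasoning being implicitly invoked, so you have simply supplied the details the paper chose to omit.
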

  
\subsection{Order isomorphisms}\label{SEC24}

Let $(E_i,\mathcal{B}(E_i))$ be a measurable space and $m_i$ be a $\sigma$-finite measure on it for $i=1,2$.  The following concepts play a central role in this paper.  Note that we do not impose any topological structure on $E_i$ for these concepts. 

\begin{definition}
\begin{itemize}
\item[(1)] A linear map $U:L^2(E_1,m_1)\rightarrow L^2(E_2,m_2)$ is called \emph{positivity preserving} if $Uf\ge 0$, $m_2$-a.e., for $f\in L^2_+(E_1,m_1)$.
\item[(2)]  A positivity preserving map $U$ is said to be an \emph{order isomorphism} if $U$ has a positivity preserving inverse.  
\end{itemize}
\end{definition}
\begin{remark}
A positivity preserving operator is always bounded; see,  e.g.,  \cite[Theorem~4.3]{Aliprantis:2006jm}.  An order isomorphism $U$ is an invertible operator $U: L^2(E_1,m_1)\rightarrow L^2(E_2,m_2)$ such that for any $f\in L^2(E_1,m_1)$,  $Uf\geq 0$ if and only if $f\geq 0$.  Mimicking the argument concerning $T_t$ in the beginning of \S\ref{SEC22},  we can also extend $U|_{L^2_+(E_1,m_1)}$ to an operator from $L_+(E_1,m_1)$ to $L_+(E_2,m_2)$,  which is still denoted by $U$.  
\end{remark}

The following proposition presents a representation for order isomorphisms,  which is crucial to our treatment,  and the proof is referred to \cite[Proposition~3.2]{DW21}. 

\begin{proposition}\label{PRO26}
Assume that $E_i$ is a standard Borel space,  i.e.  isomorphic to a Polish space with its Borel $\sigma$-algebra, for $i=1,2$.  
Let $U:L^2(E_1,m_1) \to L^2(E_2,m_2)$ be an order isomorphism.  Then there exists a measurable map $s:(E_1,\mathcal{B}^{m_1}(E_1))\to (0,\infty)$ and a measurable map $\tau:(E_1,\mathcal{B}^{m_1}(E_1))\to (E_2,\mathcal{B}^{m_2}(E_2))$ with measurable a.e.  inverse $\tau^{-1}$ such that
\begin{equation}\label{eq:24}
    Uf=\left(\frac{f}{s}\right)\circ\tau^{-1}, \quad \forall f \in L^2(E_1,m_1).
\end{equation}
Here $s$ and $\tau$ are unique up to equality almost everywhere. 
\end{proposition}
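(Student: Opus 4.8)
The plan is to read an order isomorphism as a bijective, order-continuous lattice isomorphism between the Banach lattices $L^2(E_1,m_1)$ and $L^2(E_2,m_2)$, distil its structural content into an isomorphism of the underlying measure algebras, realize that isomorphism by a genuine point map using the standard Borel hypothesis, and then peel off the multiplier $s$.

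First I would record the lattice-theoretic consequences of the definition. Since both $U$ and $U^{-1}$ are positivity preserving, $f\le g$ gives $g-f\ge0$, hence $Ug-Uf=U(g-f)\ge0$; thus $U$ and $U^{-1}$ are order preserving, so $U$ carries least upper bounds to least upper bounds and is a lattice isomorphism, satisfying $U(f\vee g)=Uf\vee Ug$ and $U(|f|)=|Uf|$. In particular $U$ preserves lattice disjointness, $|f|\wedge|g|=0$ iff $|Uf|\wedge|Ug|=0$, i.e. it preserves essential disjointness of supports. Because the norm of $L^2$ is order continuous, $U$ is automatically order continuous and so preserves countable suprema of increasing sequences; this is what I will need for $\sigma$-completeness below. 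All of this is elementary and I expect no difficulty here.

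Next I would pass to the measure algebras. To each $A\in\mathcal{B}^{m_1}(E_1)$ I associate the band $B_A:=\{f\in L^2(E_1,m_1): f=0\ m_1\text{-a.e. on }A^c\}$, and the assignment $A\mapsto B_A$ descends to a $\sigma$-complete Boolean isomorphism between the measure algebra $\mathfrak{M}_1:=\mathcal{B}^{m_1}(E_1)/\mathcal{N}_1$ and the lattice of bands. Since $U$ is a lattice isomorphism, $U(B_A)$ is again a band, hence $U(B_A)=B_{A'}$ for a unique class $[A']\in\mathfrak{M}_2$, and I set $\Phi([A]):=[A']$. Using the lattice identities and the order continuity from the previous step, one checks that $\Phi:\mathfrak{M}_1\to\mathfrak{M}_2$ is a $\sigma$-complete Boolean isomorphism, respecting complementation and countable unions and intersections. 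Then comes the key step, the point realization: replacing $m_i$ by the equivalent finite measures $g_i\cdot m_i$ for a strictly positive $g_i\in L^1\cap L^2$ changes neither the null sets, nor the bands, nor $\Phi$, so I may assume $m_1,m_2$ are probability measures; because $E_1,E_2$ are standard Borel, the von Neumann--Mackey point-realization theorem for isomorphisms of measure algebras of standard probability spaces implements $\Phi$ by an essentially unique bimeasurable point map, yielding a measurable $\tau:(E_1,\mathcal{B}^{m_1}(E_1))\to(E_2,\mathcal{B}^{m_2}(E_2))$ with measurable a.e. inverse $\tau^{-1}$ and $\Phi([A])=[\tau(A)]$. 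I expect this to be the main obstacle: one must quote the correct point-realization statement in the standard Borel, $\sigma$-finite setting and guarantee that $\tau$ is a.e. bijective with a measurable inverse, rather than merely a measure-algebra morphism.

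Finally I would extract the weight. Let $C$ be the composition operator $g\mapsto g\circ\tau$ acting on positive measurable functions (so $C^{-1}f=f\circ\tau^{-1}$); by construction $C$ induces $\Phi^{-1}$ on measure algebras, whence the composite $W:=U\circ C$ is an order isomorphism, in the extended sense on $L_+(E_2,m_2)$, that induces the identity on $\mathfrak{M}_2$ and therefore maps every band $B_{A'}$ onto itself. A band-preserving order isomorphism must be multiplication by a strictly positive measurable function: testing on indicators $\mathbf{1}_{A'}$ of sets of finite measure and using $\sigma$-finiteness produces a measurable $\rho:E_2\to(0,\infty)$ with $Wg=\rho\cdot g$. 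Unwinding, $Uf=W(f\circ\tau^{-1})=\rho\cdot(f\circ\tau^{-1})=(f/s)\circ\tau^{-1}$ with $s:=1/(\rho\circ\tau)$ a strictly positive measurable function on $E_1$, which is exactly \eqref{eq:24}. Uniqueness up to a.e. equality then follows by evaluating \eqref{eq:24} on indicators of sets of finite measure to pin down $\tau$ a.e., and afterwards on a strictly positive $f$ to pin down $s$ a.e.
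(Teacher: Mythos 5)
Your argument is correct. Note, though, that the paper never proves Proposition~\ref{PRO26} itself: it defers entirely to \cite[Proposition~3.2]{DW21}, and the proof behind that citation runs along essentially the same lines you propose --- read the order isomorphism as a lattice isomorphism, pass to the induced Boolean $\sigma$-isomorphism of measure algebras, invoke the von Neumann--Mackey point-realization theorem (the only place the standard Borel hypothesis is genuinely used, and one must indeed quote the version for null-set-preserving, not necessarily measure-preserving, isomorphisms of standard probability spaces), and then peel off the multiplier by locality. So your proposal is a faithful, self-contained reconstruction of the cited proof rather than a different route; one small simplification is available in your first step, since an order isomorphism automatically preserves all existing suprema and infima (both $U$ and $U^{-1}$ are order preserving), so the order continuity of the $L^2$-norm need not be invoked at all.
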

\begin{remark}
The map $\tau$ is an \emph{almost-isomorphism} from $E_1$ to $E_2$ in the sense that there exist $m_i$-negligible sets $N_i\in \mathcal{B}^{m_i}(E_i)$ such that $\tau: E_1\setminus N_1\rightarrow E_2\setminus N_2$ is a strict isomorphism.  
\end{remark}

The $s$ and $\tau$ are called \emph{scaling} and \emph{transformation} associated with the order isomorphism $U$ respectively.  Clearly,  the inverse of $U$
\[
	U^{-1}g=s \cdot g\circ \tau,\quad g\in L^2(E_2,m_2)
\]
is also an order isomorphism.  
The scaling and transformation associated with $U^{-1}$ are $\frac{1}{s}\circ \tau^{-1}$ and $\tau^{-1}$.  

\section{Unitary order isomorphisms intertwining semigroups}\label{SEC3}

Let $(\sE^i,\sF^i)$ be a Dirichlet form on $L^2(E_i,m_i)$,  where $L^2(E_i,m_i)$ is given in \S\ref{SEC24},  for $i=1,2$.  Denote by $(T^i_t)_{t\geq 0}$ the $L^2$-semigroup of $(\sE^i,\sF^i)$.  An order isomorphism $U:L^2(E_1,m_1)\to L^2(E_2,m_2)$ is said to \emph{intertwine} $(T^1_t)_{t\ge0}$ and $(T^2_t)_{t\ge0}$,  if 
\begin{equation}\label{eq:31}
UT^1_tf=T^2_tUf,\quad \forall f\in L^2(E_1,m_1).
\end{equation}
 Denote by $\mathscr O$ the family of all order isomorphisms intertwining $(T^1_t)_{t\geq 0}$ and $(T^2_t)_{t\geq 0}$.  Set
\[
\mathscr U:=\left\{U\in \mathscr O: (Uf,Ug)_{L^2(E_2,m_2)}=(f,g)_{L^2(E_1,m_1)},\forall f,g \in L^2(E_1,m_1)\right\},
\]
i.e.  the family of all unitary order isomorphisms intertwining $(T^1_t)_{t\geq 0}$ and $(T^2_t)_{t\geq 0}$.  Any $U\in \mathscr U$ is an isometry between $L^2(E_1,m_1)$ and $L^2(E_2,m_2)$ and it is easy to verify that
\begin{equation}\label{eq:32}
	\sF^2=U\sF^1,\quad \sE^2(f,g)=\sE^1(U^{-1}f,U^{-1}g),\quad f,g\in \sF^2;
\end{equation}
in other words,  $(\sE^2,\sF^2)$ is the image Dirichlet form of $(\sE^1,\sF^1)$ under $U$. 

We endow $E_i$ with a Polish topological structure,  i.e.  $E_i$ is assumed to be a Polish space and $\mathcal{B}(E_i)$ is the Borel $\sigma$-algebra generated by the continuous functions on $E_i$,  for $i=1,2$.  Further assume that $(\sE^i,\sF^i)$ is quasi-regular.  It is worth emphasising that these assumptions are not necessary for the set-up of $U\in \mathscr O$.  Denote by $\mathbf{Exc}^+_1$ the family of excessive functions defined as \eqref{eq:23} with respect to $(T^1_t)_{t\geq 0}$ and by $(\sE^{1,h},\sF^{1,h})$ the $h$-transform of $(\sE^1,\sF^1)$ for $h\in \mathbf{Exc}^+_1$.  The $\sE^1$-q.c.  $m_1$-version of $h\in \mathbf{Exc}^+_1$ is denoted by $\tilde{h}$.  Note that both $U_h$ and $U_j$ corresponding to $h$-transformation and quasi-homeomorphism are unitary order isomorphisms intertwining semigroups; see Lemmas~\ref{LM22} and \ref{LM23}.    In general we have the following characterization of unitary order isomorphisms intertwining $(T^1_t)_{t\geq 0}$ and $(T^2_t)_{t\geq 0}$.   

\begin{theorem}\label{THM31}
Assume that $E_i$ is a Polish topological space with the Borel $\sigma$-algebra $\mathcal{B}(E_i)$ and $(\sE^i,\sF^i)$ is a quasi-regular Dirichlet form on $L^2(E_i,m_i)$ for $i=1,2$.  Then $U\in \mathscr U$,  if and only if there exists $h\in \mathbf{Exc}^+_1$ and a quasi-homeomorphism $j$ from $(\sE^{1,h},\sF^{1,h})$ to $(\sE^2,\sF^2)$ such that $U=U_jU_h$.  The pair $(\tilde h,j)$ for $U\in \mathscr U$ is unique up to $\sE^1$-q.e.  equality.
\end{theorem}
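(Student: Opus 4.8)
The plan is to prove both implications, treating the ``if'' part as an immediate consequence of Lemmas~\ref{LM22} and \ref{LM23} and concentrating the real work on the ``only if'' part, where the excessive function $h$ and the quasi-homeomorphism $j$ must be manufactured from the abstract data of $U$. For the ``if'' direction, suppose $U=U_jU_h$ with $h\in\mathbf{Exc}^+_1$ and $j$ a quasi-homeomorphism from $(\sE^{1,h},\sF^{1,h})$ to $(\sE^2,\sF^2)$. Each factor is a unitary order isomorphism: $U_h$ is the isometry \eqref{eq:25} with positivity preserving inverse $g\mapsto gh$, and $U_j$ is the isometry \eqref{eq:26}; by Lemmas~\ref{LM22} and \ref{LM23} they intertwine $(T^1_t)$ with $(T^{1,h}_t)$ and $(T^{1,h}_t)$ with $(T^2_t)$ respectively, so the composition $U$ lies in $\mathscr U$.

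For the ``only if'' direction I would first extract $h$. Given $U\in\mathscr U$, set $h:=U^{-1}\mathbf{1}_{E_2}$ using the extension of $U^{-1}$ to $L_+$. By the representation $U^{-1}g=s\cdot g\circ\tau$ recorded in \S\ref{SEC24}, one has $h=s$, which is finite and strictly positive $m_1$-a.e.\ by Proposition~\ref{PRO26}. To see that $h$ is excessive, apply $U^{-1}$ to \eqref{eq:31} to obtain $T^1_tU^{-1}=U^{-1}T^2_t$; since $(T^2_t)$ is sub-Markovian, $T^2_t\mathbf{1}\le\mathbf{1}$, and since $U^{-1}$ is positivity (hence order) preserving,
\[
T^1_th=T^1_tU^{-1}\mathbf{1}=U^{-1}T^2_t\mathbf{1}\le U^{-1}\mathbf{1}=h,\qquad t\ge 0.
\]
By the standard quasi-continuous regularization of finite excessive functions for quasi-regular forms (cf.\ \cite{CF12,FOT11}), $h$ admits an $\sE^1$-q.c.\ $m_1$-version $\tilde h$, and because $h>0$ $m_1$-a.e.\ an excessive function cannot vanish on a non-polar set, so $\tilde h>0$ $\sE^1$-q.e.; thus $h\in\mathbf{Exc}^+_1$.

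Next I would reduce to a measure-preserving intertwiner. Set $V:=UU_h^{-1}$. As a composition of unitary order isomorphisms intertwining semigroups (using Lemma~\ref{LM22} for $U_h^{-1}$), $V$ is a unitary order isomorphism intertwining $(T^{1,h}_t)$ and $(T^2_t)$, and $V\mathbf{1}=U(h\cdot\mathbf{1})=Uh=UU^{-1}\mathbf{1}=\mathbf{1}$. Applying Proposition~\ref{PRO26} to $V$, its scaling $s_V$ satisfies $(1/s_V)\circ j^{-1}=V\mathbf{1}=\mathbf{1}$, forcing $s_V\equiv 1$; hence $Vg=g\circ j^{-1}=U_jg$ with $j$ the transformation of $V$. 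Unitarity of $V$ gives, by change of variables, $m_2=(h^2\cdot m_1)\circ j^{-1}$, which is condition (ii) of a quasi-homeomorphism, while \eqref{eq:32} applied to $V$ yields $\sF^2=\{\hat f:\hat f\circ j\in\sF^{1,h}\}$ and $\sE^2(\hat f,\hat g)=\sE^{1,h}(\hat f\circ j,\hat g\circ j)$, which is condition (iii); then $U=U_jU_h$ holds by the definition of $V$.

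The hardest step is the remaining condition (i): upgrading the measurable a.e.-isomorphism $j=\tau$ to a genuine quasi-homeomorphism, that is, producing compact $\sE^1$-nests $\{F_n\}$ and $\sE^2$-nests $\{\hat F_n\}$ with $j|_{F_n}\colon F_n\to\hat F_n$ a topological homeomorphism. Here I would exploit quasi-regularity of both forms: take a countable family of $\sE^{1,h}$-q.c.\ functions separating the points of $E_1$ off an $\sE^1$-polar set, together with a compact $\sE^1$-nest (conditions (iii) and (i) of quasi-regularity), transport them through $V$, and match the quasi-topologies. This is precisely the coincidence-of-topologies phenomenon established in \cite{LSW18}, which furnishes the homeomorphism on a common refinement of the nests; one then checks that the resulting point map agrees $m_1$-a.e.\ with $\tau$. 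For uniqueness, note that $h=U^{-1}\mathbf{1}_{E_2}$ is determined by $U$ as an element of $L_+(E_1,m_1)$, so $\tilde h$ is unique up to $\sE^1$-q.e.\ equality; then $U_h$ is determined, hence so is $U_j=UU_h^{-1}$, and since two quasi-homeomorphisms inducing the same $U_j$ agree $m_1$-a.e.\ and are continuous on nests, they coincide $\sE^1$-q.e.
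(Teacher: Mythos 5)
Your proposal is correct and follows essentially the same route as the paper's proof: sufficiency via Lemmas~\ref{LM22} and \ref{LM23}, necessity by identifying $h$ with the scaling $s=U^{-1}\mathbf{1}$ of Proposition~\ref{PRO26}, deducing excessivity from the intertwining relation and sub-Markovianity, and deferring the quasi-continuous regularization, the $\sE^1$-q.e.\ positivity of $\tilde h$, and the upgrade of $\tau$ to a homeomorphism on nests to the results of \cite{LSW18}, exactly as the paper does. Your factorization through $V=UU_h^{-1}$ (showing its scaling is identically $1$) is only a mild repackaging of the paper's direct verification of conditions (ii) and (iii) for $U$, and your uniqueness argument coincides in substance with the paper's appeal to Proposition~\ref{PRO26} and q.e.\ agreement of quasi-continuous versions.
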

\begin{proof}
\emph{Sufficiency}.  Let $(h,j)$ be such a pair.  Clearly $U_jU_h$ is invertible because so are $U_j$ and $U_h$.  In addition,  $U_jU_hf\geq 0$ if and only if $f\geq 0$ for any $f\in L^2(E_1,m_1)$.  Hence $U_jU_h$ is an order isomorphism.  For the intertwining property,  it follows from Lemmas~\ref{LM22} and \ref{LM23} that for any $t>0$ and $f\in L^2(E_1,m_1)$,
\[
\begin{aligned}
	&T^2_t\left(U_jU_h\right)f=\left(T^2_t U_j \right)U_hf=\left(U_jT^{1,h}_t\right)U_hf\\
	=&U_j\left(T^{1,h}_t U_h\right) f=U_j\left(U_hT^1_t\right)f=\left(U_jU_h\right)T^1_tf. 
\end{aligned}\]
Finally $U_hU_j$ is unitary because so are $U_h$ and $U_j$. 

\emph{Necessity}.  The idea of this proof is due to \cite{LSW18} and we present some details for readers' convenience.  In view of Proposition~\ref{PRO26},  let $s$ and $\tau$ be the scaling and transformation associated with $U\in \mathscr U$.  Recall that $T^i_t$ and $U$ also stand for the extended operators on $L_+(E_1,m_1)$ or $L_+(E_2,m_2)$.  Particularly \eqref{eq:24} and \eqref{eq:31} also hold for $f\in L_+(E_1,m_1)$,  and the Markovian property of $(\sE^i,\sF^i)$ implies that $T^i_t 1\leq 1$.  Set $h:=s$.  
 It follows from
\[
	UT^1_t h =T^2_t Uh=T^2_t 1\leq 1
\]
that $T^1_t h\leq U^{-1}1=h$.  Then \cite[Lemma~3.4 and Proposition~3.7]{LSW18} indicate that $h$ admits an $\sE^1$-q.c.  $m_1$-version $\tilde{h}$.  Repeating the argument in the proof of \cite[Lemma~3.9]{LSW18},  we can obtain that $\{\tilde{h}=0\}$ is $\sE^1$-polar,  and thus $h\in \mathbf{Exc}^+_1$.  In addition,  the same argument as the proof of \cite[Theorem~3.11]{LSW18} shows that $\tau$ admits an $m_1$-version $\tilde{\tau}$ and there exist $\sE^i$-nests $\{F^i_n\}$ such that $\tilde{\tau}|_{F^1_n}:F^1_n\rightarrow F^2_n$ is a topological homeomorphism.  Set $j:=\tilde{\tau}$.  We show that $j$ is a quasi-homeomorphism from $(\sE^{1,h},\sF^{1,h})$ to $(\sE^2,\sF^2)$.  In fact,  the condition (i) in the definition of quasi-homeomorphism has been obtained.  To conclude (ii),  it follows from \eqref{eq:24} and the unitary property of $U$ that for any $f,g\in L^2(E_1,h^2\cdot m_1)$,  
\[
	\int_{E_1}fg h^2dm_1=\int_{E_2}U(fh)U(gh)dm_2=\int_{E_1} fgd(m_2\circ j).  
\]
Hence $m_2=(h^2\cdot m_1)\circ j^{-1}$ and (ii) holds true.  For (iii),  \eqref{eq:32} tells us that $f\in \sF^2$ if and only if $U^{-1}f\in \sF^1$.  Note that $U^{-1}f=h\cdot f\circ j\in \sF^1$,  if and only if $f\circ j\in \sF^{1,h}$.  Consequently $f\in \sF^2$ amounts to $f\circ j\in \sF^{1,h}$.  Meanwhile,  \eqref{eq:21} and \eqref{eq:32} yield that for $f,g\in\sF^2$,
\[
	\sE^2(f,g)=\sE^1(U^{-1}f,U^{-1}g)=\sE^{1,h}(f\circ j, g\circ j).  
\]
As a result (iii) is obtained.  Finally it suffices to note that, on account of \eqref{eq:25},  \eqref{eq:26} and \eqref{eq:24},  $U=U_jU_h$ holds true.  

\emph{Uniqueness}.  Let $(\tilde{h}_1, j_1)$ be another pair for $U$ such that $U=U_{j_1}U_{h_1}$.   Then $h=h_1$ and $j=j_1$,  $m_1$-a.e.,  due to the uniqueness stated in Proposition~\ref{PRO26}.  The identity $j_1=j$,  $\sE^1$-q.e.,  is a result of \cite[Lemma~3.10]{LSW18}.  Another identity $\tilde{h}=\tilde{h}_1$, $\sE^1$-q.e.,  follows because both $\tilde{h}$ and $\tilde{h}_1$ are $\sE^1$-q.c.  That completes the proof.  
\end{proof}

This characterization particularly shows that the existence of unitary order isomorphism intertwining $(T^1_t)_{t\geq 0}$ and $(T^2_t)_{t\geq 0}$,  which depends only on the measurable structures of $E_1$ and $E_2$,  leads to the coincidence of topological structures of $E_1$ and $E_2$ in a q.e.  sense.  That is the following.

\begin{corollary}\label{COR32}
Adopt the assumptions of Theorem~\ref{THM31}.  If $\mathscr U\neq \emptyset$,  then there exist $\sE^i$-nests $\{F_n^i:n\geq 1\}$ for $i=1,2$ and a map $j: \cup_{n\geq 1}F^1_n\rightarrow \cup_{n\geq 1}F^2_n$ such that $j|_{F^1_n}$ is a homeomorphism from $F^1_n$ to $F^2_n$ for any $n\geq 1$. 
\end{corollary}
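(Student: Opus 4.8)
The plan is to read the conclusion directly off Theorem~\ref{THM31}, the only genuine content being to translate the notion of quasi-homeomorphism into the stated nest-and-homeomorphism picture and to observe that passing to an $h$-transform does not disturb the nest structure.

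First, since $\mathscr U\neq\emptyset$, I would fix some $U\in\mathscr U$ and invoke Theorem~\ref{THM31} to produce $h\in\mathbf{Exc}^+_1$ together with a quasi-homeomorphism $j$ from $(\sE^{1,h},\sF^{1,h})$ to $(\sE^2,\sF^2)$ satisfying $U=U_jU_h$. Unwinding the definition of quasi-homeomorphism from \S\ref{SEC23}, this $j$ comes equipped with an $\sE^{1,h}$-nest $\{G_n:n\geq1\}$, an $\sE^2$-nest $\{F^2_n:n\geq1\}$, and the property that $j|_{G_n}$ is a topological homeomorphism from $G_n$ onto $F^2_n$ for every $n\geq1$. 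Thus the only discrepancy between this data and the assertion of the corollary is that $\{G_n\}$ is a priori an $\sE^{1,h}$-nest rather than an $\sE^1$-nest.

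To close that gap I would appeal to part (1) of the $h$-transform lemma in \S\ref{SEC22}, which asserts that an ascending sequence of closed subsets of $E_1$ is an $\sE^{1,h}$-nest if and only if it is an $\sE^1$-nest. Hence $\{G_n\}$ is simultaneously an $\sE^1$-nest; setting $F^1_n:=G_n$ then furnishes $\sE^i$-nests $\{F^i_n:n\geq1\}$ for $i=1,2$ and a map $j:\cup_{n\geq1}F^1_n\to\cup_{n\geq1}F^2_n$ with the desired homeomorphism property on each level. I do not expect any real obstacle here, since the statement is a formal corollary of the theorem; the only points warranting care are that $\mathbf{Exc}^+_1$ is nonempty (constant functions always lie in it) and that Theorem~\ref{THM31} genuinely applies, both of which are secured by the hypothesis $\mathscr U\neq\emptyset$ under the standing quasi-regularity assumptions on $(\sE^i,\sF^i)$.
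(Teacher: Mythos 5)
Your proof is correct and follows essentially the same route the paper intends: the corollary is read off directly from Theorem~\ref{THM31} by unwinding the nest data in the definition of quasi-homeomorphism, with the lemma in \S\ref{SEC22} (part (1), the equivalence of $\sE^{1,h}$-nests and $\sE^1$-nests) closing the only gap. The paper treats this as immediate for exactly these reasons, so nothing further is needed.
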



\section{General order isomorphism intertwining semigroups}\label{SEC4}

\subsection{Invariant sets}
Let $(\sE,\sF)$ be a Dirichlet form on $L^2(E,m)$ and $(T_t)_{t\geq 0}$ be its $L^2$-semigroup.   
A set $A\in \mathcal{B}^{m}(E)$ is called \emph{$T_t$-invariant} if 
\[
	1_A \cdot T_t f=T_t\left(1_A f\right),\quad f\in L^2(E,m).  
\]
 Given a $T_t$-invariant set $A$, we can define the restriction of $(\sE,\sF)$ to $A$ as follows:
\[
\sF^{A}:=\{f|_A:f\in \sF\},\quad \sE^{A}(f|_A,g|_A):=\sE(1_Af,1_Ag),\; f,g\in\sF.
\]
Note that $(\sE^{A},\sF^{A})$ is a Dirichlet form on $L^2(A,  m|_A)$ whose $L^2$-semigroup is 
\[
	T_t^{A} \left(f|_{A}\right)= T_t(f 1_{A})|_{A},\quad f\in L^2(E,m).
\] 
We would write $m$ for $m|_A$ if no confusions caused. 
For $f\in L^2(A,m)$,  denote its zero extension to $E$ still by $f$ if there is no risk of ambiguity.  In view of \cite[Proposition~2.1.6]{CF12},  we may write $L^2(A,m)\subset L^2(E,m)$ and $\sF^A\subset \sF$ in abuse of notations.  

\begin{lemma}\label{LM42}
Assume that $E$ is a Hausdorff topological space and let $A$ be a $T_t$-invariant set.  Endow $A$ with the restricted topology of $E$.  Then the following hold:
\begin{itemize}
\item[(1)] If $\{F_n: n\geq 1\}$ is an $\sE$-nest,  then $\{F_n\cap A: n\geq 1\}$ is an $\sE^A$-nest. 
\item[(2)] $f|_A$ is $\sE^A$-q.c.  for any $\sE$-q.c. function $f$.  
\item[(3)] If $N\subset E$ is $\sE$-polar,  then $N\cap A$ is $\sE^A$-polar.  
\end{itemize}
\end{lemma}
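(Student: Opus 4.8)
The plan is to reduce everything to statement (1): once that is in hand, (2) and (3) will follow almost immediately from the definitions of quasi-continuity and polarity. Throughout I would work with the identification from \cite[Proposition~2.1.6]{CF12}, under which (via zero extension) $\sF^A$ is realized as the subspace $\sF_A:=\{g\in\sF: g=0,\ m\text{-a.e. on }A^c\}$ of $\sF$, with $f|_A$ corresponding to $1_A f$. Under this identification the norms agree, $\sE^A_1(f|_A,f|_A)=\sE_1(1_A f,1_A f)$, so $\sE^A_1$-density in $\sF^A$ is nothing but $\sE_1$-density in $\sF_A$; moreover, for a closed $G\subset A$ the space $\sF^A_G$ corresponds to $\{g\in\sF: g=0,\ m\text{-a.e. on }(A\cap G)^c\}$.

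The single structural ingredient I would establish first is that $P_A:f\mapsto 1_A f$ is an $\sE_1$-contraction of $\sF$ into itself. This is exactly what $T_t$-invariance provides: by \cite[Proposition~2.1.6]{CF12} one has $1_A f\in\sF$ together with the orthogonal decomposition $\sE(f,f)=\sE(1_A f,1_A f)+\sE(1_{A^c}f,1_{A^c}f)$, so $\sE(1_A f,1_A f)\le\sE(f,f)$; combined with $\|1_A f\|_{L^2}\le\|f\|_{L^2}$ this gives $\sE_1(1_A f,1_A f)\le\sE_1(f,f)$, and linearity then makes $P_A$ $\sE_1$-continuous.

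To prove (1) I would first note that each $F_n\cap A$ is closed in the restricted topology of $A$ and that the sequence is ascending, so only the density remains. Given $g\in\sF_A$, i.e.\ $g=1_A g$, I would use that $\{F_n\}$ is an $\sE$-nest to pick $g_k\in\sF_{F_{n(k)}}$ with $g_k\to g$ in $\sE_1$, and then apply $P_A$: continuity yields $1_A g_k\to 1_A g=g$ in $\sE_1$, while $1_A g_k$ vanishes $m$-a.e.\ on $A^c\cup F_{n(k)}^c=(A\cap F_{n(k)})^c$ and hence lies in $\sF^A_{F_{n(k)}\cap A}$ under the identification above. This shows $\cup_n\sF^A_{F_n\cap A}$ is $\sE^A_1$-dense in $\sF^A$, giving (1).

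For (2) and (3) I would argue directly from (1). If $f$ is $\sE$-q.c., choose a defining $\sE$-nest $\{F_n\}$ with $f|_{F_n}$ finite and continuous; then $\{F_n\cap A\}$ is an $\sE^A$-nest by (1), and $(f|_A)|_{F_n\cap A}$, being a restriction of the continuous $f|_{F_n}$, is continuous, so $f|_A$ is $\sE^A$-q.c. Similarly, if $N\subset E\setminus\cup_n F_n$ for an $\sE$-nest $\{F_n\}$, then $\{F_n\cap A\}$ is an $\sE^A$-nest and $N\cap A\subset\cap_n(A\setminus(F_n\cap A))$, so $N\cap A$ is $\sE^A$-polar. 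I expect the only genuinely substantive point to be the $\sE_1$-contraction property of $P_A$; everything else is bookkeeping with the identification $\sF^A\cong\sF_A$.
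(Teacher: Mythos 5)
Your proof is correct and takes essentially the same route as the paper's: both work through the zero-extension identification of $\sF^A$ with $\{g\in\sF: g=0,\ m\text{-a.e. on }A^c\}$ from \cite[Proposition~2.1.6]{CF12}, approximate a given element using the $\sE$-nest, and then use the $\sE_1$-contraction of $f\mapsto 1_A f$ (coming from the orthogonal decomposition for invariant sets) to push the approximating sequence into $\cup_n\sF^A_{F_n\cap A}$, with (2) and (3) following trivially from (1). The only cosmetic difference is that you isolate the contraction property of $P_A$ as an explicit preliminary step, whereas the paper folds it into the single inequality $\|f_n-f\|_{\sE^A_1}\le\|g_n-f\|_{\sE_1}$.
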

\begin{proof}
We only need to prove the first assertion.  Set $\hat{F}_n:=F_n\cap A$. It suffices to show $\cup_{n\geq 1}\sF^A_{\hat{F}_n}$ is $\sE^A_1$-dense in $\sF^A$.  To accomplish this,  take $f\in \sF^A\subset \sF$.  Then there exists $\{g_n\}\subset \cup_{n\geq 1}\sF_{F_n}$ such that $\|g_n-f\|_{\sE_1}\rightarrow 0$.  Without loss of generality assume that $g_n\in \sF_{F_n}$.  Set $f_n:=g_n|_A\in \sF^A$.  Since $g_n=0$,  $m$-a.e. on $E\setminus F_n$,  it follows that $f_n=0$,  $m$-a.e.  on $A\setminus \hat{F}_n$.  Hence $f_n\in \sF^A_{\hat{F}_n}$.  Applying \cite[Proposition~2.1.6]{CF12},  we get that
\[
	\|f_n-f\|_{\sE^A_1}\leq \|g_n-f\|_{\sE_1}\rightarrow 0.  
\]
Therefore the assertion can be concluded. 
\end{proof}

In addition,  $(T_t)_{t\geq 0}$ or $(\sE,\sF)$ is called \emph{irreducible},  if either $m(A)=0$ or $m(E\setminus A)=0$ for any $T_t$-invariant set $A$.  It is worth pointing out that the definitions of invariant set and irreducibility depend only on the measurable structure of $E$.

\subsection{Invariant sets under order isomorphism}

From now on let $E_i$ be a Polish topological space and $(\sE^i,\sF^i)$ be a quasi-regular Dirichlet form on $L^2(E_i,m_i)$ whose $L^2$-semigroup is denoted by $(T^i_t)_{t\geq 0}$ for $i=1,2$.  Further let $U\in \mathscr O$,  whose scaling and transformation are $s$ and $\tau$ respectively. 

\begin{lemma}\label{LM41} 
$A$ is $T^1_t$-invariant,  if and only if $\tau(A)$ is $T^2_t$-invariant.  Particularly,  $(\sE^1,\sF^1)$ is irreducible,  if and only if so is $(\sE^2,\sF^2)$.  
\end{lemma}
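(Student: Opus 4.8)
The engine of the proof is the observation that the representation \eqref{eq:24} forces $U$ to intertwine multiplication by indicators. Concretely, for $A\in\mathcal{B}^{m_1}(E_1)$ and $f\in L^2(E_1,m_1)$, direct substitution into \eqref{eq:24} gives
\[
U(1_A f)=\Big(\frac{1_A f}{s}\Big)\circ\tau^{-1}=(1_A\circ\tau^{-1})\cdot\Big(\frac{f}{s}\circ\tau^{-1}\Big)=1_{\tau(A)}\cdot Uf,
\]
since $1_A\circ\tau^{-1}=1_{\tau(A)}$; here $\tau(A)\in\mathcal{B}^{m_2}(E_2)$ because $\tau^{-1}$ is measurable. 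First I would record this single identity (together with its companion for $U^{-1}$, which has transformation $\tau^{-1}$), as it is the one nonformal ingredient of the whole argument.

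Granting it, the invariance transfer is a short formal chain. Assuming $A$ is $T^1_t$-invariant, I would fix $g\in L^2(E_2,m_2)$, put $f=U^{-1}g$, and combine the multiplication identity, the intertwining \eqref{eq:31} in the form $T^2_tU=UT^1_t$, and the $T^1_t$-invariance of $A$ to compute
\[
T^2_t(1_{\tau(A)}g)=T^2_t U(1_A f)=U T^1_t(1_A f)=U(1_A T^1_t f)=1_{\tau(A)}U T^1_t f=1_{\tau(A)}T^2_t g,
\]
which is exactly the $T^2_t$-invariance of $\tau(A)$. The converse needs no new work: $U^{-1}\in\mathscr O$ intertwines $(T^2_t)_{t\geq0}$ and $(T^1_t)_{t\geq0}$ with transformation $\tau^{-1}$, so the identical computation applied to $U^{-1}$ carries $T^2_t$-invariant sets to $T^1_t$-invariant sets; choosing the set $\tau(A)$ recovers $A$ and closes the equivalence.

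For the irreducibility statement the one remaining ingredient is that $\tau$ carries $m_1$-null sets to $m_2$-null sets and back. This again falls out of \eqref{eq:24}: taking $f=1_A$ gives $U1_A=1_{\tau(A)}/(s\circ\tau^{-1})$, which (as $s>0$) vanishes $m_2$-a.e.\ iff $m_2(\tau(A))=0$; since $U$ is injective this yields $m_1(A)=0\iff m_2(\tau(A))=0$, and symmetrically for complements using $\tau(E_1\setminus A)=E_2\setminus\tau(A)$ mod null sets (valid because $\tau$ is an almost-isomorphism). Then, assuming $(\sE^1,\sF^1)$ irreducible and given a $T^2_t$-invariant $B$, the set $A:=\tau^{-1}(B)$ is $T^1_t$-invariant by the first part, so $m_1(A)=0$ or $m_1(E_1\setminus A)=0$; transporting along $\tau$ gives $m_2(B)=0$ or $m_2(E_2\setminus B)=0$, so $(\sE^2,\sF^2)$ is irreducible, and the reverse direction is identical with the roles of the two forms exchanged.

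I expect no deep obstacle: the entire argument is driven by the multiplication identity $U(1_Af)=1_{\tau(A)}Uf$, after which everything is bookkeeping with \eqref{eq:31}. The only points requiring genuine care are the measurability of $\tau(A)$ and the null-set correspondence underlying the irreducibility half, both of which hinge on the strict positivity of the scaling $s$ and on $\tau$ being an almost-isomorphism.
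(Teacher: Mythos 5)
Your proposal is correct and follows essentially the same route as the paper: both hinge on the representation \eqref{eq:24} giving the multiplication identity $U(1_Af)=1_{\tau(A)}Uf$, combine it with the intertwining relation \eqref{eq:31} (the paper runs the computation for both $1_{\tau(A)}T^2_tg$ and $T^2_t(1_{\tau(A)}g)$ and invokes injectivity of $U$, while you prove one direction and apply the argument to $U^{-1}$ -- an immaterial difference), and reduce the irreducibility claim to the null-set correspondence between $m_1$ and $m_2\circ\tau$, which both arguments extract from \eqref{eq:24} and the strict positivity of $s$. The only point to note is that testing with $f=1_A$ requires the extension of $U$ to $L_+(E_1,m_1)$ (as $1_A$ need not lie in $L^2$), which the paper's Remark after the definition of order isomorphisms supplies.
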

\begin{proof}
Note that $U:L^2(E_1,m_1)\rightarrow L^2(E_2,m_2)$ is a bijective and write $g:=Uf$ for $f\in L^2(E_1,m_1)$.  Then $f=U^{-1}g$. We have
\[
	1_{\tau(A)} T_t^2 g=1_{\tau(A)}UT_t^1f=1_{\tau(A)}\cdot \frac{T^1_t f}{s}\circ \tau^{-1}=\frac{1_A T^1_t f}{s}\circ \tau^{-1}=U\left(1_A T^1_tf \right)
\]
and 
\[
	T^2_t(1_{\tau(A)}g)=T^2_t\left(1_{\tau(A)}Uf\right)=T^2_t\left(\frac{1_A f}{s}\circ \tau^{-1}\right)=T^2_tU(1_A f)=U\left(T^1_t (1_Af)\right).
\]
Hence $1_{\tau(A)} T_t^2 g=T^2_t(1_{\tau(A)}g)$ amounts to $1_A T^1_tf=T^1_t (1_Af)$.  In other words,  $A$ is $T^1_t$-invariant,  if and only if $\tau(A)$ is $T^2_t$-invariant.  

For the equivalence of irreducibility of $(\sE^1,\sF^1)$ and $(\sE^2,\sF^2)$,  it suffices to prove that $m_1\circ \tau^{-1}$ and $m_2$ are mutually absolutely continuous.  In fact,  take $A\in \mathcal{B}^{m_2}(E_2)$ with $m_2(A_2)=0$.  Then $1_A=0$ in $L^2(E_2,m_2)$ and hence $U^{-1}1_A=0$ in $L^2(E_1,m_1)$.  It follows from 
\[
	m_1\circ \tau^{-1}(A)=\int_{E_1} 1_A\circ \tau dm_1=\int_{E_1}\frac{U^{-1}1_A}{s} dm_1=0
\]
that $m_1\circ \tau^{-1}\ll m_2$.  The contrary can be obtained analogically.  That completes the proof. 
\end{proof}

Denote by $\|U\|$ the operator norm of $U\in \mathscr O$.  The following result characterizes all order isomorphisms intertwining semigroups for irreducible case; see also \cite{LSW18}.

\begin{proposition}\label{PRO42}
Adopt the assumptions of Theorem~\ref{THM31} and assume further that $(\sE^1,\sF^1)$ is irreducible.  Then $U\in \mathscr O$,  if and only if there exists $h\in \mathbf{Exc}^+_1$ and a quasi-homeomorphism $j$ from $(\sE^{1,h},\sF^{1,h})$ to $(\sE^2,\sF^2)$ such that $U=c\cdot  U_j U_h$ for some constant $c>0$. 
The pair $(\tilde{h},j)$ for $U\in \mathscr O$ is unique up to $\sE^1$-q.e.  equality. 
\end{proposition}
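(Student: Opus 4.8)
The plan is to reduce everything to the unitary case already settled in Theorem~\ref{THM31}, by showing that under irreducibility any $U\in\mathscr O$ is automatically a positive constant multiple of an operator in $\mathscr U$. Sufficiency is immediate and I would dispose of it first: if $U=c\,U_jU_h$, then $U_jU_h\in\mathscr U$ by the sufficiency part of Theorem~\ref{THM31}, and multiplying by the constant $c>0$ preserves bijectivity, the two-sided positivity, and—since scalars commute with every operator—the intertwining relation \eqref{eq:31}; hence $U\in\mathscr O$. All the content lies in necessity.

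For necessity I would pass to the adjoint. Every order isomorphism is bounded, so $U^*\colon L^2(E_2,m_2)\to L^2(E_1,m_1)$ exists, and since each $T^i_t$ is self-adjoint, taking adjoints in $UT^1_t=T^2_tU$ gives $T^1_tU^*=U^*T^2_t$. Composing the two intertwining relations yields
\[
U^*U\,T^1_t=U^*T^2_tU=T^1_t\,U^*U,\qquad t\ge0,
\]
so the bounded positive self-adjoint operator $U^*U$ commutes with the semigroup $(T^1_t)_{t\ge0}$.

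Next I would identify $U^*U$ explicitly. Writing $Uf=(f/s)\circ\tau^{-1}$ from \eqref{eq:24} and performing a change of variables through $\tau$ (justified by the mutual absolute continuity established in the proof of Lemma~\ref{LM41}; write $\rho>0$ for the $m_1$-density of the $\tau^{-1}$-push-forward of $m_2$), one computes $U^*g=(\rho/s)\,(g\circ\tau)$ and hence $U^*Uf=w\cdot f$ with $w:=\rho/s^2$. Thus $U^*U$ is the multiplication operator by $w$, boundedness forces $w\in L^\infty(E_1,m_1)$, and $w>0$ $m_1$-a.e. \emph{The main obstacle is now to upgrade ``multiplication by $w$ commutes with $T^1_t$'' into ``$w$ is a.e.\ constant.''} I would handle this by the spectral theorem: for any Borel $S\subset\mathbb R$ the indicator $1_B$ of the level set $B:=w^{-1}(S)$ is a bounded Borel function of $w$, so multiplication by $1_B$ lies in the von Neumann algebra generated by multiplication by $w$, which is contained in the weakly closed commutant of $\{T^1_t\}$. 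Therefore $1_B\,T^1_tf=T^1_t(1_Bf)$, i.e.\ every such level set $B$ is $T^1_t$-invariant; irreducibility of $(\sE^1,\sF^1)$ then forces $m_1(B)=0$ or $m_1(E_1\setminus B)=0$ for all of them, which is exactly the assertion that $w$ equals a constant $c^2$ ($c>0$, since $w>0$) $m_1$-a.e.

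Consequently $U^*U=c^2\,\mathrm{Id}$, so $V:=c^{-1}U$ satisfies $(Vf,Vg)_{L^2(E_2,m_2)}=(f,g)_{L^2(E_1,m_1)}$; being still an order isomorphism intertwining the semigroups, $V\in\mathscr U$, and Theorem~\ref{THM31} supplies $h\in\mathbf{Exc}^+_1$ and a quasi-homeomorphism $j$ from $(\sE^{1,h},\sF^{1,h})$ to $(\sE^2,\sF^2)$ with $V=U_jU_h$, whence $U=c\,U_jU_h$. For uniqueness, suppose $U=c\,U_jU_h=c_1\,U_{j_1}U_{h_1}$ with both pairs satisfying the stated conditions. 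Each $U_h$ is an isometry onto $L^2(E_1,h^2\!\cdot\! m_1)$, and condition (ii) in the definition of quasi-homeomorphism forces the image measure $(h^2\!\cdot\! m_1)\circ j^{-1}$ to equal $m_2$, so each $U_j$ is an isometry onto $L^2(E_2,m_2)$; thus $U_jU_h$ and $U_{j_1}U_{h_1}$ are both unitary, forcing $c=c_1=\|U\|$ and then $U_jU_h=U_{j_1}U_{h_1}$. The uniqueness clause of Theorem~\ref{THM31} finally gives $\tilde h=\tilde h_1$ and $j=j_1$, $\sE^1$-q.e.
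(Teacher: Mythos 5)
Your proof is correct, but it reaches the key step by a different, self-contained route. The paper disposes of necessity in one line: by Lemma~\ref{LM41} the target form $(\sE^2,\sF^2)$ is irreducible, and then it simply cites \cite[Corollary~2.4]{LSW18} for the fact that $U/\|U\|$ is unitary, after which Theorem~\ref{THM31} finishes the argument. You instead \emph{prove} that fact: passing to adjoints to get $T^1_tU^*=U^*T^2_t$, computing $U^*U$ explicitly as multiplication by $w=\rho/s^2$ via the representation \eqref{eq:24} and the mutual absolute continuity from Lemma~\ref{LM41}, and then using the spectral theorem to see that the level sets of $w$ are $T^1_t$-invariant, so that irreducibility forces $w$ to be constant. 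This is legitimate throughout (the commutant of the self-adjoint family $\{T^1_t\}$ is a von Neumann algebra containing $\{M_w\}''$, hence all multiplications $M_{1_S(w)}$, and the null-or-conull dichotomy for every level set does force $w$ constant for a $\sigma$-finite measure), and it is in fact the same computation the paper itself carries out later, on each irreducible component, in the proof of Theorem~\ref{THM45}: there $U_n^*U_nf=\phi_nf$ with $\phi_n=\rho_n/s_n^2$, and constancy of $\phi_n$ is obtained by ``mimicking the proof of \cite[Lemma~2.2]{LSW18}''---your von Neumann algebra argument is a clean substitute for that citation as well. What the paper's route buys is brevity; what yours buys is independence from the external reference and an explicit identification of $U^*U$ that makes the role of irreducibility transparent. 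Your uniqueness argument is also slightly more detailed than the paper's (which just invokes Theorem~\ref{THM31}): you observe that $U_jU_h$ and $U_{j_1}U_{h_1}$ are automatically unitary, which forces $c=c_1=\|U\|$ before the uniqueness clause of Theorem~\ref{THM31} applies; this fills in a detail the paper leaves implicit.
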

\begin{proof}
The sufficiency is clear because
\[
	U_c: L^2(E_2,m_2)\rightarrow L^2(E_2,m_2),\quad f\mapsto c\cdot f
\]
is an order isomorphism such that $U_cT^2_t=T^2_tU_c$.  For the necessity,  on account of Lemma~\ref{LM41},  $(\sE^2,\sF^2)$ is irreducible.  Then \cite[Corollary~2.4]{LSW18} leads to that $U/\|U\|$ is unitary for $U\in \mathscr O$.  In other words,  $U/\|U\|\in \mathscr U$.  As a result,  the representation of $U$ with $c=\|U\|$ as well as the uniqueness of $(\tilde h, j)$ is a consequence of Theorem~\ref{THM31}.  That completes the proof.  
\end{proof}

Obviously the analogical result of Corollary~\ref{COR32} holds.  

\begin{corollary}
Adopt the same assumptions of Proposition~\ref{PRO42}.  If $\mathscr O\neq \emptyset$,  then there exist $\sE^i$-nests $\{F_n^i: n\geq 1\}$ for $i=1,2$ and a map $j: \cup_{n\geq 1}F^1_n\rightarrow \cup_{n\geq 1}F^2_n$ such that $j|_{F^1_n}$ is a homeomorphism from $F^1_n$ to $F^2_n$ for any $n\geq 1$. 
\end{corollary}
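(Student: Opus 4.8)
The plan is to read off the conclusion directly from Proposition~\ref{PRO42}, in exactly the same way that Corollary~\ref{COR32} was read off from Theorem~\ref{THM31}. Since $\mathscr O\neq\emptyset$, I would first fix any $U\in\mathscr O$ and apply Proposition~\ref{PRO42} to produce a function $h\in\mathbf{Exc}^+_1$ and a quasi-homeomorphism $j$ from the $h$-transform $(\sE^{1,h},\sF^{1,h})$ to $(\sE^2,\sF^2)$, with $U=c\cdot U_jU_h$. The multiplicative constant $c>0$ plays no role in the topological conclusion and can be ignored throughout.

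Next I would simply unwind the definition of quasi-homeomorphism given in \S\ref{SEC23}. By that definition, the existence of a quasi-homeomorphism $j$ from $(\sE^{1,h},\sF^{1,h})$ to $(\sE^2,\sF^2)$ already furnishes an $\sE^{1,h}$-nest $\{F^1_n:n\geq 1\}$ of closed subsets of $E_1$, an $\sE^2$-nest $\{F^2_n:n\geq 1\}$ of closed subsets of $E_2$, and the map $j:\cup_{n\geq1}F^1_n\to\cup_{n\geq1}F^2_n$ whose restriction $j|_{F^1_n}:F^1_n\to F^2_n$ is a topological homeomorphism for every $n\geq 1$. This is precisely the map together with the pair of nests demanded by the statement, the only discrepancy being that the first nest is a priori an $\sE^{1,h}$-nest rather than an $\sE^1$-nest.

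The one remaining point, which is also the only step that is not entirely immediate, is the conversion of the $\sE^{1,h}$-nest into an $\sE^1$-nest. This is supplied by part~(1) of the first lemma of \S\ref{SEC22}, which asserts that an ascending sequence of closed subsets of $E_1$ is an $\sE^{1,h}$-nest if and only if it is an $\sE^1$-nest. Consequently $\{F^1_n:n\geq1\}$ is also an $\sE^1$-nest, and choosing $\{F^1_n\}$, $\{F^2_n\}$ and $j$ as above yields the assertion. I do not anticipate any genuine obstacle here: the whole content of the corollary is already packaged inside Proposition~\ref{PRO42} together with the nest-invariance of the $h$-transform, so no new estimate, construction, or topological argument is needed beyond this bookkeeping.
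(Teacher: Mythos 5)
Your proof is correct and is exactly the argument the paper intends: the paper simply states this corollary as the ``obvious'' analogue of Corollary~\ref{COR32}, and your unwinding of Proposition~\ref{PRO42} together with the definition of quasi-homeomorphism and the nest-invariance of the $h$-transform (part~(1) of the lemma in \S\ref{SEC22}) is precisely the bookkeeping being left to the reader.
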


\subsection{General characterization}
Take a $T^1_t$-invariant set $A$.  
Denote by $T^{1,A}_t$ and $T^{2,\tau(A)}_t$ the restrictions of $T^1_t$ to $A$ and $T^2_t$ to $\tau(A)$ respectively.  Clearly \eqref{eq:24} yields that $(A,\tau(A))$ is $U$-invariant in the sense that
\[
	U(f1_A)=1_{\tau(A)}Uf,\quad f\in L^2(E_1,m_1).  
\]
As a consequence,  
\begin{equation}\label{eq:41}
	U^A(f|_A):=U(f1_A)|_{\tau(A)}\in L^2(\tau(A),  m_2),\quad f\in L^2(E_1,m_1)
\end{equation}
is well defined. 

\begin{lemma}\label{LM44}
Adopt the same assumptions of Lemma~\ref{LM41} and let $A$ be a $T^1_t$-invariant set.  Then \eqref{eq:41} gives an order isomorphism $$U^A: L^2(A,m_1)\rightarrow L^2(\tau(A), m_2)$$ intertwining $T^{1,A}_t$ and $T^{2,\tau(A)}_t$.  
\end{lemma}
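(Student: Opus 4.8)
The plan is to realize $U^A$ as the restriction of $U$ to the $U$-invariant pair $(A,\tau(A))$ and to transport the two defining features of $U$—being an order isomorphism and intertwining the semigroups—down to the restricted spaces. The essential structural input is the identity $U(f1_A)=1_{\tau(A)}Uf$ established just before the statement, which guarantees that $U$ carries $L^2(A,m_1)$ (viewed as $\{f1_A:f\in L^2(E_1,m_1)\}\subset L^2(E_1,m_1)$ via zero extension) onto $L^2(\tau(A),m_2)$. Well-definedness of $U^A$ on $m_1$-equivalence classes is then immediate, since $f|_A=f'|_A$ forces $f1_A=f'1_A$ and hence $U(f1_A)=U(f'1_A)$, so the restriction to $\tau(A)$ is unambiguous.

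For the order isomorphism property, I would exhibit the explicit two-sided inverse. Applying the $U$-invariance to the order isomorphism $U^{-1}$ (whose transformation is $\tau^{-1}$) and to the $T^2_t$-invariant set $\tau(A)$ yields the dual identity $U^{-1}(g1_{\tau(A)})=1_A U^{-1}g$; this also follows directly from the representation $U^{-1}g=s\cdot g\circ\tau$ of Proposition~\ref{PRO26} together with $1_{\tau(A)}\circ\tau=1_A$, $m_1$-a.e. Define $V(g|_{\tau(A)}):=U^{-1}(g1_{\tau(A)})|_A$. Since $U(f1_A)$ is supported in $\tau(A)$, a short computation gives $V(U^A(f|_A))=U^{-1}(U(f1_A))|_A=f|_A$, and symmetrically $U^A(V(g|_{\tau(A)}))=g|_{\tau(A)}$, so $U^A$ is bijective with inverse $V$. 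Both $U^A$ and $V$ are positivity preserving because $U$ and $U^{-1}$ are; hence $U^A$ is an order isomorphism. (Equivalently, one checks that $U^A$ has the Proposition~\ref{PRO26} form with scaling $s|_A$ and transformation $\tau|_A$.)

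The intertwining is then a direct computation resting on the two invariances. Fix $f\in L^2(E_1,m_1)$. Because $A$ is $T^1_t$-invariant, $T^1_t(f1_A)$ is supported in $A$, so
\[
U^A\big(T^{1,A}_t(f|_A)\big)=U\big(T^1_t(f1_A)\big)\big|_{\tau(A)}=T^2_t\big(U(f1_A)\big)\big|_{\tau(A)},
\]
the last step using the intertwining \eqref{eq:31}, $UT^1_t=T^2_tU$. On the other side, $U(f1_A)=1_{\tau(A)}Uf$ is supported in $\tau(A)$, whence
\[
T^{2,\tau(A)}_t\big(U^A(f|_A)\big)=T^2_t\big(U(f1_A)\big)\big|_{\tau(A)}.
\]
The two right-hand sides coincide, which is exactly $U^AT^{1,A}_t=T^{2,\tau(A)}_tU^A$.

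Most of the work here is bookkeeping rather than analysis, so the main obstacle is measure-theoretic care: keeping the zero-extension identification $L^2(A,m_1)\hookrightarrow L^2(E_1,m_1)$ consistent, justifying that restriction to $\tau(A)$ descends to $m_2$-equivalence classes, and verifying the two support reductions $1_AT^1_t(f1_A)=T^1_t(f1_A)$ and $1_{\tau(A)}U(f1_A)=U(f1_A)$ together with the a.e. identity $1_{\tau(A)}\circ\tau=1_A$. None of these is deep, but each must be handled on equivalence classes so that the restricted operators $T^{1,A}_t$, $T^{2,\tau(A)}_t$, and $U^A$ are all unambiguously defined.
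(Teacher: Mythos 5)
Your proof is correct and follows essentially the same route as the paper: both realize $U^A$ as the restriction of $U$ via the $U$-invariance identity $U(f1_A)=1_{\tau(A)}Uf$, invert it through $U^{-1}$ restricted to $\tau(A)$ (your $V$ is exactly the paper's surjectivity witness $f:=(U^{-1}g)|_A$), and verify the intertwining by the same three-step computation using the $T^1_t$-invariance of $A$, the intertwining of $U$, and the $T^2_t$-invariance of $\tau(A)$. Your treatment is slightly more explicit about well-definedness on equivalence classes and the dual identity $U^{-1}(g1_{\tau(A)})=1_AU^{-1}g$, which the paper leaves implicit, but the substance is identical.
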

\begin{proof}
Clearly $U^A$ is linear and  injective.  For $g\in L^2(\tau(A),m_2)\subset L^2(E_2,m_2)$,  set $f:=(U^{-1}g)|_A\in L^2(A,m_1)$.  Then
\[
	U^A f = U(U^{-1}g \cdot 1_A)|_{\tau(A)}=U(U^{-1}g)|_{\tau(A)}=g.  
\]  
Hence $U^A$ is surjective.  Note that $f\geq 0$ if and only if $U^A f\geq 0$ for any $f\in L^2(A,m_1)$.  Thus $U^A$ is an order isomorphism.  Since $\tau(A)$ is $T^2_t$-invariant, it follows that for any $f\in L^2(A,m_1)\subset L^2(E_1,m_1)$, 
\[
\begin{aligned}
	&U^AT^{1,A}_tf=U^A\left(T^1_tf|_A \right)=U(T^1_tf)|_{\tau(A)}=T^2_t (Uf)|_{\tau(A)}\\=&T^{2}_t(Uf\cdot 1_{\tau(A)})|_{\tau(A)}
	=T^{2,\tau(A)}_t(Uf|_{\tau(A)})=T^{2,\tau(A)}_t(Uf|_{\tau(A)})=T^{2,\tau(A)}_t U^A f.  
\end{aligned}\] 
That completes the proof. 
\end{proof}

Denote by $X^1=(X^1_t)_{t\geq 0}$ the Markov process associated with $(\sE^1,\sF^1)$ and by $(P^1_t(x,dy))_{t\geq 0}$ the probability transition semigroup of $X^1$.  We impose the following assumption:
\begin{description}
\item[(AC)] $(P^1_t)_{t\geq 0}$ satisfies the absolute continuity condition with respect to $m_1$,  i.e.  $P^1_t(x,dy)\ll m_1(dy)$ for any $t\geq 0$ and $x\in E_1$.  
\end{description}
Under \textbf{(AC)},  Kuwae \cite{K21} shows that $E_1$ admits a unique irreducible decomposition for $(\sE^1,\sF^1)$:
\begin{equation}\label{eq:44}
	E_1=\cup_{1\leq n\leq N} A_n^1,
\end{equation}
where $\{A_n^1\in \mathcal{B}^{m_1}(E_1): 1\leq n\leq N\}$,  $N\in \mathbb N$ or $\infty$,  forms a disjoint union,  and each $A_n^1$ is $T^1_t$-invariant and contains no proper $T^1_t$-invariant subsets.  Particularly the restriction of $(\sE^1,\sF^1)$ to $A_n^1$ is irreducible.  Set $A_n^2:=\tau(A_n^1)$.  On account of Lemma~\ref{LM41}, 
\begin{equation}\label{eq:42}
	E_2=\cup_{1\leq n\leq N} A_n^2,\quad m_2\text{-a.e.}
\end{equation}
forms an irreducible decomposition for $(\sE^2,\sF^2)$.  
Define a family of step functions on $E_2$ as follows:
\begin{equation}\label{eq:43}
	\mathscr S:=\left\{\varphi=\sum_{n=1}^N c_n\cdot 1_{A_n^2}: \exists c>1,\text{ s.t. } 1/c\leq c_n\leq c,  1\leq n\leq N\right\}.  
\end{equation}
For any $\varphi\in \mathscr S$,  define
\begin{equation}\label{eq:45}
	U_\varphi: L^2(E_2,m_2)\rightarrow L^2(E_2,m_2),\quad f\mapsto \varphi\cdot f.
\end{equation}
Clearly $U_\varphi$ is bijective and $f\geq 0$ if and only if $U_\varphi f\geq 0$.  Consequently,  $U_\varphi$ is an order isomorphism.  
The main result is the following.

\begin{theorem}\label{THM45}
Adopt the assumptions of Theorem~\ref{THM31} and assume that $\mathbf{(AC)}$ holds.  Then $U\in \mathscr O$,  if and only if
\begin{itemize}
\item[(i)] $E_2$ admits an irreducible decomposition \eqref{eq:42} for $(\sE^2,\sF^2)$;
\item[(ii)]  There exists $h\in \mathbf{Exc}^+_1$,  a quasi-homeomorphism $j$ from $(\sE^{1,h},\sF^{1,h})$ to $(\sE^2,\sF^2)$ and $\varphi\in \mathscr S$ such that 
\begin{equation}\label{eq:46}
U=U_\varphi U_jU_h. 
\end{equation}
\end{itemize}
The triple $(h,j, \varphi)$ for $U\in \mathscr O$ is unique in the sense that if $(h_1,j_1,\varphi_1)$ is another triple such that $U=U_{\varphi_1} U_{j_1}U_{h_1}$,  then 
\[
\begin{aligned}
&\varphi=\varphi_1,\quad m_2\text{-a.e.},  \\
&j=j_1, \quad \tilde{h}=\tilde{h}_1,\quad \sE^1\text{-q.e.}
\end{aligned}\]
\end{theorem}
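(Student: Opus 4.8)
The plan is to reduce Theorem~\ref{THM45} to the irreducible case already handled in Proposition~\ref{PRO42} by using the irreducible decomposition \eqref{eq:44} and the restriction machinery of Lemma~\ref{LM44}. The guiding idea is that an order isomorphism intertwining semigroups respects the invariant-set structure (Lemma~\ref{LM41}), so it decomposes into a family of order isomorphisms between the irreducible pieces; on each piece Proposition~\ref{PRO42} forces the form $c_n\cdot U_{j_n}U_{h_n}$, and the scalars $c_n$ assemble into the step function $\varphi$ while the $h_n$ and $j_n$ glue into a single $h\in\mathbf{Exc}^+_1$ and a single quasi-homeomorphism $j$.

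\textbf{Sufficiency.} First I would verify that $U_\varphi U_j U_h\in\mathscr O$. Each factor is an order isomorphism, so the composition is one; for the intertwining property, Lemmas~\ref{LM22} and \ref{LM23} handle $U_h$ and $U_j$, and it remains to check $U_\varphi T^2_t=T^2_t U_\varphi$. This is where the definition of $\mathscr S$ in \eqref{eq:43} matters: since $\varphi$ is constant on each $T^2_t$-invariant set $A_n^2$ and these exhaust $E_2$, the $T^2_t$-invariance of each $A_n^2$ gives $U_\varphi T^2_t f=\sum_n c_n 1_{A_n^2}T^2_t f=\sum_n c_n T^2_t(1_{A_n^2}f)=T^2_t U_\varphi f$. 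This is routine.

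\textbf{Necessity.} This is the substantive direction. Given $U\in\mathscr O$ with scaling $s$ and transformation $\tau$, assumption \textbf{(AC)} and Kuwae~\cite{K21} yield the decomposition \eqref{eq:44}, and Lemma~\ref{LM41} transports it to the decomposition \eqref{eq:42} of $E_2$ with $A_n^2=\tau(A_n^1)$, giving (i). For (ii), I would apply Lemma~\ref{LM44} on each piece to obtain order isomorphisms $U^{A_n^1}:L^2(A_n^1,m_1)\to L^2(A_n^2,m_2)$ intertwining the restricted semigroups $T^{1,A_n^1}_t$ and $T^{2,A_n^2}_t$. Since the restriction of $(\sE^1,\sF^1)$ to $A_n^1$ is irreducible by construction, Proposition~\ref{PRO42} applies on each piece and yields $U^{A_n^1}=c_n\cdot U_{j_n}U_{h_n}$ for a constant $c_n>0$, an excessive function $h_n$ for the restricted semigroup, and a quasi-homeomorphism $j_n$ from the $h_n$-transform of the restriction to $(\sE^{2,A_n^2},\sF^{2,A_n^2})$. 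I then define $h:=\sum_n h_n 1_{A_n^1}$ (equivalently, $\tilde h$ by gluing the q.c.\ versions via Lemma~\ref{LM42}(2)), $j:=\tilde\tau$ as the common extension of the $j_n$, and $\varphi:=\sum_n c_n 1_{A_n^2}$, and verify \eqref{eq:46} holds by checking it piecewise using the $U$-invariance of each pair $(A_n^1,A_n^2)$. The uniform bound $1/c\le c_n\le c$ needed to place $\varphi\in\mathscr S$ should follow from the global boundedness of $U$ and $U^{-1}$ (their operator norms control each $c_n=\|U^{A_n^1}\|$ from above and below, via $\|U^{-1}\|$), which must be checked.

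\textbf{Main obstacle.} The delicate point is the gluing. Lemma~\ref{LM44} and Proposition~\ref{PRO42} produce the pieces $h_n$, $j_n$, $c_n$ \emph{separately on each invariant set}, each defined only up to $\sE^{1,A_n^1}$-q.e.\ equality and with its own nest. I must check that the glued object $h$ lies in $\mathbf{Exc}^+_1$ for the \emph{global} form (excessivity $T^1_t h\le h$ and existence of a global $\sE^1$-q.c.\ version with $\tilde h>0$ $\sE^1$-q.e.), that $j$ is a genuine quasi-homeomorphism of the global $h$-transform onto $(\sE^2,\sF^2)$ (in particular that a countable union of piecewise nests can be merged into global $\sE^{1,h}$- and $\sE^2$-nests, which is where $N=\infty$ requires care), and that the decomposition is measurably coherent so that $s=h$ $m_1$-a.e.\ reproduces the scaling of $U$. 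The cleanest route is probably to bypass piecewise reassembly of $h$ and $j$ and instead argue globally: set $h:=s$ as in Theorem~\ref{THM31}, show directly that $U_\varphi^{-1}U=U/\!\!\sim$ is unitary on each piece after dividing out the $c_n$, so that $U_\varphi^{-1}U\in\mathscr U$ globally, and then invoke Theorem~\ref{THM31} to extract a single $(h,j)$ in one stroke. Under this route the only genuinely new work is constructing $\varphi\in\mathscr S$ with $U_\varphi^{-1}U$ unitary, i.e.\ showing $\|U^{A_n^1}\|$ is uniformly bounded above and below in $n$; the rest reduces to Theorem~\ref{THM31}. \textbf{Uniqueness} then follows from the uniqueness of $(\tilde h,j)$ in Theorem~\ref{THM31} together with the a.e.-uniqueness of the scaling in Proposition~\ref{PRO26}, which pins down $\varphi$ $m_2$-a.e.
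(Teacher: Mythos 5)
Your sufficiency argument and your derivation of assertion (i) match the paper, and your ``cleanest route'' is indeed the paper's strategy: set $c_n:=\|U^{A^1_n}\|$, form $\varphi=\sum_n c_n 1_{A^2_n}$, prove $U_\varphi^{-1}U\in\mathscr U$, and apply Theorem~\ref{THM31} once, globally. However, on both of your routes the crux of that strategy --- that each normalized restriction $U_n/\|U_n\|$, $U_n:=U^{A^1_n}$, is \emph{unitary} --- is never actually proved, and this is a genuine gap, not a routine verification. In your first route you would obtain it from Proposition~\ref{PRO42} applied to each piece, but that proposition carries the hypotheses of Theorem~\ref{THM31}: the state spaces must be Polish and the forms quasi-regular. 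An invariant set $A^1_n$ is only a $\mathcal{B}^{m_1}(E_1)$-measurable set; with the relative topology it need not be Polish, and $(\sE^{1,A^1_n},\sF^{1,A^1_n})$ need not be quasi-regular (the compact-nest condition can fail, since $F_k\cap A^1_n$ is closed in $A^1_n$ but generally not compact). The paper flags exactly this point right after Theorem~\ref{THM45}: Proposition~\ref{PRO42} cannot be applied directly to $U_n$ ``because $A^1_n$ is not necessarily Polish''. So your worry about gluing the $h_n$, $j_n$ and nests is not the real obstruction; the piecewise inputs themselves are not available.

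In your second route you assert the unitarity ``directly'' and then claim the only genuinely new work is the two-sided uniform bound on $\|U_n\|$. That bound (which you correctly obtain from $\|U_n\|\le\|U\|$ and $\|U_n^{-1}\|\le\|U^{-1}\|$) only places $\varphi$ in $\mathscr S$; it cannot yield unitarity --- an invertible positivity-preserving intertwining operator with bounded inverse is not automatically a scalar multiple of a unitary, and that it is here is precisely what irreducibility of the pieces must be used to show. The paper's argument, absent from your proposal, runs: with $s_n:=s|_{A^1_n}$, $\tau_n:=\tau|_{A^1_n}$ and $\rho_n:=d(m_2\circ\tau_n)/dm_1$, one computes $(U_n^*U_nf,g)_{L^2(A^1_n,m_1)}=\int_{A^1_n}fg\,(\rho_n/s_n^2)\,dm_1$, so $U_n^*U_n$ is multiplication by $\phi_n:=\rho_n/s_n^2$; the intertwining property then gives $\phi_n\,T^{1,n}_tf=T^{1,n}_t(\phi_nf)$, and irreducibility of the restricted form (via the argument of \cite[Lemma~2.2]{LSW18}) forces $\phi_n$ to be constant $m_1$-a.e., whence $U_n/\|U_n\|$ is unitary. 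Two smaller inaccuracies: the $h$ produced by Theorem~\ref{THM31} applied to $U_\varphi^{-1}U$ is the scaling of $U_\varphi^{-1}U$, namely $s\cdot(\varphi\circ\tau)$, not $s$ itself; and your uniqueness argument via Proposition~\ref{PRO26} alone is insufficient, because the scaling of $U_\varphi U_jU_h$ is $h/(\varphi\circ j)$, and piecewise constants can be traded between $h$ and $\varphi$ without changing it --- what pins down $\varphi$ is the unitarity of $U_{j_1}U_{h_1}$ (the paper mimics \eqref{eq:48} for this), after which Theorem~\ref{THM31} gives $j=j_1$ and $\tilde h=\tilde h_1$, $\sE^1$-q.e.
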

\begin{proof}
\emph{Sufficiency}.  Applying Theorem~\ref{THM31},  we only need to show $U_\varphi T^2_t=T^2_tU_\varphi$.  In fact,  let $\varphi=\sum_{n=1}^Nc_n\cdot 1_{A_n^2}$ as in \eqref{eq:43}.  Since \eqref{eq:42} forms an irreducible decomposition for $(\sE^2,\sF^2)$,  it follows that for any $g\in L^2(E_2,m_2)$,
\[
	1_{A_n^2}\cdot T^2_t g=T^2_t(g\cdot 1_{A_n^2}). 
\]
Hence 
\begin{equation}\label{eq:47}
U_\varphi T^2_t g=\sum_{n=1}^N c_n 1_{A_n^2}\cdot T^2_t g=\sum_{n=1}^N T^2_t(c_n g 1_{A_n^2})=T^2_t\left(\sum_{n=1}^N c_n g 1_{A_n^2} \right)=T^2_tU_\varphi g,
\end{equation}
where the third identity is due to the convergence of $\sum_{n=1}^N c_n g 1_{A_n^2}$ in $L^2(E_2,m_2)$.  

\emph{Necessity}.  Let \eqref{eq:44} be the irreducible decomposition for $(\sE^1,\sF^1)$.  Set $A^2_n:=\tau(A^1_n)$.  Then \eqref{eq:42} is an irreducible decomposition for $(\sE^2,\sF^2)$,  i.e.  the first assertion holds true.  In view of Lemma~\ref{LM44},  $$U_n:=U^{A^1_n}: L^2(A^1_n,m_1)\rightarrow L^2(A^2_n, m_2)$$ is an order isomorphism intertwining $T^{1,n}_t:=T^{1,A^1_n}_t$ and $T^{2, n}_t:=T^{2,A^2_n}_t$.  Denote by $c_n:=\|U_n\|$ the operator norm of $U_n$,  and set
\[
	\varphi:= \sum_{n=1}^N c_n\cdot 1_{A^2_n}.  
\]
We first prove that $\varphi\in \mathscr S$.  In fact,  for any $f\in L^2(A_1,m_1)\subset L^2(E_1,m_1)$, 
\[
\begin{aligned}
	\|U_n f\|_{L^2(A^2_n,m_2)}&=\|(Uf)|_{A^2_n}\|_{L^2(A^2_n,m_2)}\leq \|Uf\|_{L^2(E_2,m_2)} \\
	&\leq \|U\|\cdot \|f\|_{L^2(E_2,m_2)}=\|U\|\cdot \|f\|_{L^2(A^2_n,m_2)}.
	\end{aligned}\]
 This implies $\|U_n\|\leq \|U\|$.   Analogically we have $\|U^{-1}_n\|\leq \|U^{-1}\|$.  In addition,  it follows from $\mathbf{Id}^1_n=U^{-1}_nU_n$,  where $\mathbf{Id}^1_n$ is the identity operator on $L^2(A^1_n,m_1)$,  that 
	\[
		c_n=\|U_n\|\geq 1/ \|U^{-1}_n\|\geq 1/\|U^{-1}\|.  
	\]
As a result,  $\varphi\in \mathscr S$ can be concluded.  
Secondly we assert that $U_n/\|U_n\|$ is unitary.  Denote by $U^*_n$ the adjoint operator of $U_n$.  Set $s_n:=s|_{A^1_n}$ and $\tau_n:=\tau|_{A^1_n}$.   For any $f,g\in L^2(A^1_n, m_1)$,  we have
\begin{equation}\label{eq:49}
	(U^*_nU_nf,g)_{L^2(A^1_n,m_1)}=(U_nf,U_ng)_{L^2(A^2_n,m_2)}=\int_{A^1_n}\frac{ fg}{s_n^2} dm_2\circ \tau_n.
\end{equation}
In view of the proof of Lemma~\ref{LM41},  we have $m_2\circ \tau_n\ll m_1$.  Set $\rho_n:=dm_2\circ \tau_n/dm_1$.  It follows from \eqref{eq:49} that $U^*_nU_n f=\phi_n f$ where $\phi_n:=\rho_n/s_n^2$.  On the other hand,  $U_nT^{1,n}_t=T^{2,n}_tU_n$ implies $T^{1,n}_t U^*_n=U^*_nT^{2,n}_t$.  Thus
\[
\phi_n T^{1,n}_t f=U^*_nU_n T^{1,n}_tf=T^{1,n}_t U^*_n U_n f=T^{1,n}_t(\phi_n f).  
\]
Mimicking the proof of \cite[Lemma~2.2]{LSW18},  we can obtain that $\phi_n$ is constant,  $m_1$-a.e. on $A^1_n$.  Then \eqref{eq:49} leads to 
\[
(U_nf,U_ng)_{L^2(A^2_n,m_2)}=c^2\cdot (f,g)_{L^2(A^1_n, m_1)}
\]
for some constant $c>0$.  Therefore $c=\|U_n\|$ and $U_n/\|U_n\|$ is unitary. 
Finally,  let $U^{-1}_\varphi$ be the inverse of \eqref{eq:45},  and set $\tilde{U}:=U^{-1}_\varphi U$.  We show $\tilde{U}\in \mathscr U$,  so that Theorem~\ref{THM31} yields the representation \eqref{eq:46}.  To accomplish this,  note that $\tilde{U}$ is clearly bijective and for $f\in L^2(E_1,m_1)$,  $f\geq 0$ if and only if $\tilde{U}f\geq 0$.  Thus $\tilde{U}$ is an order isomorphism.  Mimicking \eqref{eq:47}, we get that 
\[
	U^{-1}_\varphi T^2_tg=T^2_t U^{-1}_\varphi g,\quad g\in L^2(E_2,m_2). 
\]
It follows from $U\in \mathscr O$ that for any $f\in L^2(E_1,m_1)$, 
\[
\tilde{U}T^1_t f=U^{-1}_\varphi T^2_t(Uf)=T^2_t U^{-1}_\varphi(Uf)=T^2_t\tilde{U}f. 
\]
Consequently $\tilde{U}\in \mathscr O$.  In addition,  \eqref{eq:41} yields that
\begin{equation}\label{eq:48}
\begin{aligned}
	(\tilde{U}f,\tilde{U}f)_{L^2(E_2,m_2)}&=\sum_{n=1}^N c^2_n\cdot \left(U f|_{A^2_n},  U f|_{A^2_n}\right)_{L^2(E_2,m_2)}\\
	&=\sum_{n=1}^N c^2_n\cdot \left(U_n (f|_{A^1_n}),  U_n (f|_{A^1_n})\right)_{L^2(A_n^2,m_2)}.  
\end{aligned}\end{equation}
Since $U_n/\|U_n\|$ is unitary,  it follows that
\[
(\tilde{U}f,\tilde{U}f)_{L^2(E_2,m_2)}=\sum_{n=1}^N(f|_{A^1_n},f|_{A^1_n})_{L^2(A^1_n,m_1)}=(f,f)_{L^2(E_1,m_1)}.
\]
Therefore $\tilde{U}\in \mathscr U$.  

\emph{Uniqueness}.   Note that $U^{-1}_{\varphi_1}U=U_{j_1}U_{h_1}$ is unitary.  Mimicking \eqref{eq:48},  we can obtain that $\varphi_1=\varphi$,  $m_2$-a.e.  Particularly,  $U_\varphi=U_{\varphi_1}$ and 
\[
	U_{j_1}U_{h_1}=U^{-1}_{\varphi_1} U=U_\varphi^{-1}U=U_jU_h.  
\]
The second identity is a consequence of the uniqueness obtained in Theorem~\ref{THM31}.  That completes the proof. 
\end{proof}

We end this section with two corollaries of Theorem~\ref{THM45}.  The first one is the analogue of Corollary~\ref{COR32},  and the proof is trivial. 

\begin{corollary}
Adopt the same assumptions of Theorem~\ref{THM45}.  If $\mathscr O\neq \emptyset$,  then there exist $\sE^i$-nests $\{F_n^i: n\geq 1\}$ for $i=1,2$ and a map $j: \cup_{n\geq 1}F^1_n\rightarrow \cup_{n\geq 1}F^2_n$ such that $j|_{F^1_n}$ is a homeomorphism from $F^1_n$ to $F^2_n$ for any $n\geq 1$. 
\end{corollary}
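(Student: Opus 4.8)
The plan is to read the conclusion off directly from the representation supplied by Theorem~\ref{THM45}, so that essentially no new work is required beyond unwinding the definition of quasi-homeomorphism. First I would use the hypothesis $\mathscr O\neq\emptyset$ to fix some $U\in\mathscr O$, and then apply Theorem~\ref{THM45} to obtain $h\in\mathbf{Exc}^+_1$, a quasi-homeomorphism $j$ from $(\sE^{1,h},\sF^{1,h})$ to $(\sE^2,\sF^2)$, and $\varphi\in\mathscr S$ with $U=U_\varphi U_jU_h$. The multiplication factor $U_\varphi$ and the $h$-transform factor $U_h$ are invisible at the level of the topological statement, so the entire content is carried by the quasi-homeomorphism $j$.

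Next I would unpack the definition of quasi-homeomorphism from \S\ref{SEC23}: by definition there exist an $\sE^{1,h}$-nest $\{F^1_n:n\geq1\}$, an $\sE^2$-nest $\{F^2_n:n\geq1\}$, and the map $j:\cup_{n\geq1}F^1_n\to\cup_{n\geq1}F^2_n$ such that $j|_{F^1_n}$ is a topological homeomorphism from $F^1_n$ onto $F^2_n$ for each $n\geq 1$. The sequence $\{F^2_n\}$ is already an $\sE^2$-nest, exactly as required for $i=2$. For $i=1$ the only point to reconcile is that the statement asks for an $\sE^1$-nest whereas $\{F^1_n\}$ is a priori only an $\sE^{1,h}$-nest. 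Here I would appeal to part~(1) of the $h$-transform lemma in \S\ref{SEC22}, which asserts that an ascending sequence of closed sets is an $\sE^{1,h}$-nest if and only if it is an $\sE^1$-nest; hence $\{F^1_n\}$ is an $\sE^1$-nest as well. Combining these observations produces precisely the nests $\{F^i_n\}$ and the map $j$ claimed in the statement.

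There is no genuine obstacle here: the corollary is a formal consequence of Theorem~\ref{THM45}, and the only non-cosmetic step is the identification of $\sE^{1,h}$-nests with $\sE^1$-nests, which is exactly what renders the $h$-transform factor invisible at the level of nests, polar sets, and quasi-continuity (cf.\ the three assertions of the lemma in \S\ref{SEC22}). If one preferred to bypass even this, one could instead start from the map $j$ furnished by Theorem~\ref{THM31} applied to the unitary part $U_\varphi^{-1}U=U_jU_h\in\mathscr U$, which already lives between $(\sE^{1,h},\sF^{1,h})$ and $(\sE^2,\sF^2)$; but routing the argument through Theorem~\ref{THM45} directly is the cleanest, since it is the hypothesis $\mathscr O\neq\emptyset$ that is given.
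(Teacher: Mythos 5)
Your proposal is correct and matches the paper's intent: the paper dismisses this corollary as a trivial consequence of Theorem~\ref{THM45} (in analogy with Corollary~\ref{COR32}), and your argument is exactly that unwinding—extract the quasi-homeomorphism $j$ from the representation $U=U_\varphi U_jU_h$, read off its defining nests, and use part~(1) of the $h$-transform lemma in \S\ref{SEC22} to identify the $\sE^{1,h}$-nest with an $\sE^1$-nest. Your explicit handling of that last identification is the one step the paper leaves unsaid, and you have it right.
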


Let $U=U_\varphi U_j U_h$ be in Theorem~\ref{THM45},  and denote by $U_n:=U^{A^1_n}$ the restriction of $U$ to the $T^1_t$-invariant set $A^1_n$.  Although we cannot apply Proposition~\ref{PRO42} directly to $U_n$ because $A^1_n$ is not necessarily Polish,  the analogical representation holds true for $U_n$.  To accomplish this,  denote by $\mathbf{Exc}^+_{1,n}$ the family of excessive functions defined as \eqref{eq:21} with respect to $T^{1,A^1_n}_t$.  The $h$-transform of $(\sE^{1,A^1_n},\sF^{1,A^1_n})$ for $h_n\in \mathbf{Exc}^+_{1,n}$ is denoted by $(\sE^{1,A^1_n, h_n},\sF^{1,A^1_n,h_n})$.   Set
\[
	\tilde{A}^2_n:=j(A^1_n),\quad 1\leq n\leq N.  
\]
Then $\tilde{A}^2_n=A^2_n$,  $m_2$-a.e.  and 
\[
	E_2=\cup_{n=1}^N \tilde{A}^2_n,\quad m_2\text{-a.e.}
\]
also forms an irreducible decomposition for $(\sE^2,\sF^2)$.  The restriction of $(\sE^2,\sF^2)$ to $\tilde{A}^2_n$ is denoted by $(\sE^{2,\tilde{A}^2_n},\sF^{2, \tilde A^2_n})$

\begin{corollary}
Adopt the same assumptions of Theorem~\ref{THM45} and let $U=U_\varphi U_jU_h\in \mathscr O$.  For any $1\leq n\leq N$,  there exists $h_n\in \mathbf{Exc}^+_{1,n}$ and a quasi-homeomorphism $j_n$ from $(\sE^{1,A^1_n, h_n},\sF^{1,A^1_n,h_n})$ to $(\sE^{2,\tilde A^2_n},\sF^{2,\tilde A^2_n})$ such that $U_n=\|U_n\| U_{j_n}U_{h_n}$.  
\end{corollary}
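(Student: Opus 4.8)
The plan is to \emph{not} re-derive a decomposition of $U_n$ from scratch---which is exactly where the possible failure of Polishness of $A^1_n$ would obstruct a direct appeal to Proposition~\ref{PRO42}---but rather to restrict the \emph{global} decomposition $U=U_\varphi U_jU_h$ of Theorem~\ref{THM45} to the invariant block $A^1_n$ and to recognise its three factors. Accordingly I set $h_n:=h|_{A^1_n}$ and $j_n:=j|_{A^1_n}$, and recall from the proof of Theorem~\ref{THM45} that the constant carried by $\varphi=\sum_m c_m 1_{A^2_m}$ on the block $A^2_n$ is precisely $c_n=\|U_n\|$. Since invariance, nests, polar sets and quasi-continuity are notions that descend to $A^1_n$ through Lemma~\ref{LM42}, all the data needed below survive restriction even though $A^1_n$ carries only the subspace topology.

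First I would check $h_n\in\mathbf{Exc}^+_{1,n}$. Excessivity is immediate from the $T^1_t$-invariance of $A^1_n$ and from $h\in\mathbf{Exc}^+_1$: writing $T^{1,n}_t$ for the restricted semigroup,
\[
T^{1,n}_t h_n=T^1_t(h1_{A^1_n})|_{A^1_n}=(1_{A^1_n}T^1_t h)|_{A^1_n}\le h|_{A^1_n}=h_n.
\]
For the positive quasi-continuous version I would take the $\sE^1$-q.c.\ $m_1$-version $\tilde h$ of $h$; by Lemma~\ref{LM42}(2) its restriction $\tilde h|_{A^1_n}$ is $\sE^{1,A^1_n}$-q.c., while $\{\tilde h=0\}$ is $\sE^1$-polar, so its trace on $A^1_n$ is $\sE^{1,A^1_n}$-polar by Lemma~\ref{LM42}(3); hence $\tilde h|_{A^1_n}>0$ $\sE^{1,A^1_n}$-q.e. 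This gives $h_n\in\mathbf{Exc}^+_{1,n}$ and makes the $h_n$-transform $(\sE^{1,A^1_n,h_n},\sF^{1,A^1_n,h_n})$ meaningful.

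Next I would verify that $j_n$ is a quasi-homeomorphism from $(\sE^{1,A^1_n,h_n},\sF^{1,A^1_n,h_n})$ onto $(\sE^{2,\tilde A^2_n},\sF^{2,\tilde A^2_n})$. Let $\{G^1_k\}$ and $\{G^2_k\}$ be the nests realising the global quasi-homeomorphism $j$. Using Lemma~\ref{LM42}(1) together with the fact (\S\ref{SEC22}) that $h$-transformation leaves nests unchanged, $\{G^1_k\cap A^1_n\}$ is an $\sE^{1,A^1_n,h_n}$-nest and $\{G^2_k\cap\tilde A^2_n\}$ an $\sE^{2,\tilde A^2_n}$-nest; as $j|_{G^1_k}$ is a homeomorphism and $G^1_k\cap A^1_n$ is closed in the subspace topology, its restriction is a homeomorphism onto $j(G^1_k\cap A^1_n)=G^2_k\cap\tilde A^2_n$, which is condition (i). Restricting the global identity $m_2=(h^2\cdot m_1)\circ j^{-1}$ to $\tilde A^2_n=j(A^1_n)$ yields $m_2|_{\tilde A^2_n}=(h_n^2\cdot m_1)\circ j_n^{-1}$, which is (ii). For (iii) I would chain the definitions: using $1_{A^1_n}=1_{\tilde A^2_n}\circ j$, the definition of the restricted form, the definition of the $h_n$-transform, and the global identity $\sE^2(\hat f,\hat g)=\sE^{1,h}(\hat f\circ j,\hat g\circ j)$, one obtains $\sE^{2,\tilde A^2_n}(\hat f|_{\tilde A^2_n},\hat g|_{\tilde A^2_n})=\sE^{1,A^1_n,h_n}((\hat f|_{\tilde A^2_n})\circ j_n,(\hat g|_{\tilde A^2_n})\circ j_n)$ together with the matching domain identity.

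Finally I would assemble the representation by direct computation. For $f\in L^2(A^1_n,m_1)$, formula \eqref{eq:41} gives $U_n f=U(f1_{A^1_n})|_{A^2_n}$; applying $U=U_\varphi U_jU_h$ one has $U_h(f1_{A^1_n})=(f/h)1_{A^1_n}$, then $U_j$ carries this to $((f/h)\circ j^{-1})1_{\tilde A^2_n}$, on which $\varphi$ takes the constant value $c_n=\|U_n\|$; restricting to $\tilde A^2_n=A^2_n$ and comparing with $U_{j_n}U_{h_n}f=((f/h)\circ j^{-1})|_{\tilde A^2_n}$ yields $U_n=\|U_n\|\,U_{j_n}U_{h_n}$. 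The main obstacle is the one the statement flags: $A^1_n$ need not be Polish, so neither Proposition~\ref{PRO42} nor a fresh use of Theorem~\ref{THM31} applies; the resolution is to never leave the global picture and let Lemma~\ref{LM42} transport the topological data to the subspace topology. The only genuinely delicate points are the bookkeeping in (iii) and the mild caveat that, to meet the full-support requirement in the definition of quasi-homeomorphism, one may first pass to the topological supports of $m_1|_{A^1_n}$ and $m_2|_{\tilde A^2_n}$ inside $A^1_n$ and $\tilde A^2_n$.
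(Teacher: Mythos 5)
Your proposal is correct and takes essentially the same approach as the paper: the paper's own (one-line) proof likewise sets $h_n:=h|_{A^1_n}$, $j_n:=j|_{A^1_n}$ and appeals to Lemma~\ref{LM42} to transfer the excessivity and quasi-homeomorphism data to the restricted forms. You simply spell out the verifications the paper leaves implicit (including that $\varphi$ takes the value $\|U_n\|$ on $A^2_n$, which indeed comes from the proof and uniqueness in Theorem~\ref{THM45}), and these details are all sound.
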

\begin{proof}
Set $h_n:=h|_{A^1_n}$ and $j_n:=j|_{A^1_n}$.  It suffices to note that $h_n\in \mathbf{Exc}^+_{1,n}$ and $j_n$ is a quasi-homeomorphism from $(\sE^{1,A^1_n, h_n},\sF^{1,A^1_n,h_n})$ to $(\sE^{2,\tilde A^2_n},\sF^{2,\tilde A^2_n})$ by virtue of Lemma~\ref{LM42}. 
\end{proof}

\bibliographystyle{siam} 
\bibliography{OrdIsom} 

\end{document}